\def\shorttitle{Unique Determination of General Source Terms}
\def\shortauthor{Y. Kian, Y. Liu and M. Yamamoto}
\def\correction2
\newfont{\myfnt}{cmssi10 scaled 1440}
\numberwithin{equation}{section}
\def\ps@nk{\def\@oddhead{\vbox{\hbox to \hsize{\pic \footnotesize \it \shorttitle
\hfill \rm \thepage} \vspace{1mm} \vspace*{-2mm}}}
\def\@evenhead{\vbox{\hbox to \hsize{\pic \footnotesize \rm \thepage \hfill \it \shortauthor}
\vspace{1mm} \vspace*{-2mm}}}
\def\@oddfoot{} \def\@evenfoot{}}
\def\ps@first{\def\@oddhead{\vbox{\hbox to \hsize{\pic \footnotesize
} \break}}
\def\@oddfoot{} \def\@evenfoot{}}
\newtheoremstyle{thmstyle}
  {6pt}
  {6pt}
  {\it}
  {}
  {\bf}
  {}
  {.5em}
  {}
\newtheoremstyle{remstyle}
  {6pt}
  {6pt}
  {\rm}
  {}
  {\bf}
  {}
  {.5em}
  {}
\def\Section#1{\Sec{\large #1} \setcounter{equation}{0} \vskip -6mm\indent}
\def\Sec{\@Startsection{section}1{\z@}
                                   {-3.5ex \@plus -1ex \@minus -.2ex}%
                                   {2.3ex \@plus.2ex}%
                                   {\normalfont\large\bfseries\boldmath}}
\def\@Startsection#1#2#3#4#5#6{%
  \if@noskipsec \leavevmode \fi
 \par
  \@tempskipa #4\relax
  \@afterindenttrue
  \ifdim \@tempskipa <\z@
    \@tempskipa -\@tempskipa \@afterindentfalse
  \fi
  \if@nobreak
    \everypar{}%
  \else
    \addpenalty\@secpenalty\addvspace\@tempskipa
  \fi
  \@ifstar
    {\@ssect{#3}{#4}{#5}{#6}}%
    {\@dblarg{\@Sect{#1}{#2}{#3}{#4}{#5}{#6}}}}
\def\@Sect#1#2#3#4#5#6[#7]#8{%
  \ifnum #2>\c@secnumdepth
    \let\@svsec\@empty
  \else
    \refstepcounter{#1}%
    \protected@edef\@svsec{\@seccntformat{#1}\relax}%
  \fi
  \@tempskipa #5\relax
  \ifdim \@tempskipa>\z@
    \begingroup
      #6{%
          \@hangfrom{\hskip #3\relax\@svsec \hskip -2.5mm}%
         \interlinepenalty \@M #8\@@par}
    \endgroup
    \csname #1mark\endcsname{#7}%
    \addcontentsline{toc}{#1}{%
      \ifnum #2>\c@secnumdepth \else
        \protect\numberline{\csname the#1\endcsname}%
      \fi
      #7}%
  \else
    \def\@svsechd{%
      #6{\hskip #3\relax
      \@svsec #8}%
      \csname #1mark\endcsname{#7}%
      \addcontentsline{toc}{#1}{%
        \ifnum #2>\c@secnumdepth \else
          \protect\numberline{\csname the#1\endcsname}%
        \fi
        #7}}%
  \fi
  \@xsect{#5}}
\renewenvironment{abstract}{%
        \small
        \quotation
         \noindent {\bfseries \abstractname } }%
      {\if@twocolumn\else\endquotation\fi}
\def\Subsec{\@StartSubsection{subsection}2{\z@}%
                                     {-3.25ex\@plus -1ex \@minus -.2ex}%
                                     {1.5ex \@plus .2ex}%
                                     {\normalfont\normalsize\bfseries\boldmath}}
\def\@StartSubsection#1#2#3#4#5#6{%
  \if@noskipsec \leavevmode \fi
 \par
  \@tempskipa #4\relax
  \@afterindenttrue
  \ifdim \@tempskipa <\z@
    \@tempskipa -\@tempskipa \@afterindentfalse
  \fi
  \if@nobreak
    \everypar{}%
  \else
    \addpenalty\@secpenalty\addvspace\@tempskipa
  \fi
  \@ifstar
    {\@ssect{#3}{#4}{#5}{#6}}%
    {\@dblarg{\@SubSect{#1}{#2}{#3}{#4}{#5}{#6}}}}
\def\@SubSect#1#2#3#4#5#6[#7]#8{%
  \ifnum #2>\c@secnumdepth
    \let\@svsec\@empty
  \else
    \refstepcounter{#1}%
    \protected@edef\@svsec{\@seccntformat{#1}\relax}%
  \fi
  \@tempskipa #5\relax
  \ifdim \@tempskipa>\z@
    \begingroup
      #6{%
          \@hangfrom{\hskip #3\relax\@svsec\hskip -1.5mm}%
         \interlinepenalty \@M #8\@@par}
    \endgroup
    \csname #1mark\endcsname{#7}%
    \addcontentsline{toc}{#1}{%
      \ifnum #2>\c@secnumdepth \else
        \protect\numberline{\csname the#1\endcsname}%
      \fi
      #7}%
  \else
    \def\@svsechd{%
      #6{\hskip #3\relax
      \@svsec #8}%
      \csname #1mark\endcsname{#7}%
      \addcontentsline{toc}{#1}{%
        \ifnum #2>\c@secnumdepth \else
          \protect\numberline{\csname the#1\endcsname}%
        \fi
        #7}}%
  \fi
  \@xsect{#5}}
\def\list#1#2{\ifnum \@listdepth >5\relax \@toodeep \else \global
\advance \@listdepth\@ne \fi \rightmargin \z@ \listparindent\z@
\itemindent\z@ \csname @list\romannumeral\the\@listdepth\endcsname
\def\@itemlabel{#1}\let\makelabel\@mklab \@nmbrlistfalse #2\relax
\@trivlist\parskip 0pt\parindent\listparindent \advance \linewidth
-\rightmargin \advance\linewidth -\leftmargin \advance\@totalleftmargin
\leftmargin\parshape \@ne \@totalleftmargin \linewidth \ignorespaces}
\renewcommand{\@makecaption}[2]{\begin{center}#1. #2\end{center}}
\theoremstyle{thmstyle}
\newtheorem{thm}{\indent Theorem}[section]
\newtheorem{lem}{\indent Lemma}[section]
\newtheorem{prop}{\indent Proposition}[section]
\newtheorem{coro}{\indent Corollary}[section]
\newtheorem{defi}{\indent Definition}[section]
\newtheorem{prob}{\indent Problem}[section]
\theoremstyle{remstyle}
\newsavebox{\mygraphic}
\def\pic{\begin{picture}(0,0) \put(-210,-1250){\usebox{\mygraphic}} \end{picture}}
\newfont{\HUGEbf}{cmbx10 scaled 3500}
\definecolor{gray}{rgb}{0.9,0.9,0.9}
\def\thebibliography#1{\section*{\bf \large References}
\list{[\arabic{enumi}]} {\settowidth \labelwidth{[#1]} \leftmargin
\labelwidth \advance \leftmargin \labelsep \usecounter{enumi}}
\def\newblock{\hskip .11em plus .33em minus .07em} \footnotesize \sloppy \clubpenalty
4000 \widowpenalty 4000 \sfcode`\.=1000 \relax}
\def\BC{\mathbb C}
\def\BN{\mathbb N}
\def\BR{\mathbb R}
\def\cA{\mathcal A}
\def\cB{\mathcal B}
\def\cD{\mathcal D}
\def\cL{\mathcal L}
\def\cO{\mathcal O}
\def\cR{\mathcal R}
\def\rd{\mathrm d}
\def\dist{\mathrm{dist}}
\def\rRe{\mathrm{Re}}
\def\rdiv{\mathrm{div}}
\def\e{\mathrm e}
\def\loc{\mathrm{loc}}
\def\ri{\mathrm i}
\def\supp{\mathrm{supp}}
\def\Ga{\Gamma}
\def\Om{\Omega}
\def\al{\alpha}
\def\be{\beta}
\def\ga{\gamma}
\def\de{\delta}
\def\ve{\varepsilon}
\def\te{\theta}
\def\ka{\kappa}
\def\la{\lambda}
\def\vp{\varphi}
\def\om{\omega}
\def\f{\frac}
\def\nb{\nabla}
\def\ov{\overline}
\def\pa{\partial}
\def\wh{\widehat}
\def\tri{\triangle}
\def\casesnum#1#2{{\savebox0{$\def\item##1##2{\addno \relax
  \vphantom{##1##2} \no}
  \def\noitem##1##2{\relax
  \vphantom{##1##2}}
  \def\useritem##1##2##3{\relax
  \vphantom{##1##2}##3}
  \relax
  \ds \begin{array}{@{}r@{}}
    #2
  \end{cases}$}
  \def\item##1##2{##1 & ##2} \relax
  \def\noitem##1##2{##1 & ##2} \relax
  \def\useritem##1##2##3{##1 & ##2} \relax
  \begin{equation}
    #1
    \left\{ \begin{array}{ll}
      #2
    \end{cases} \right.
    \tag*{\usebox0}
  \end{equation}}}
\theoremstyle{definition}
\numberwithin{equation}{section}
\title{\Large\bf\boldmath Uniqueness of Inverse Source Problems for General 
Evolution Equations}
\author{\large Yavar Kian$^\dag$\qquad Yikan Liu$^\ddag$\qquad
Masahiro Yamamoto$^\star$}
\date{\today}
\begin{document}

\maketitle

\thispagestyle{first}
\renewcommand{\thefootnote}{\fnsymbol{footnote}}

\footnotetext{\hspace*{-5mm} \begin{tabular}{@{}r@{}p{14cm}@{}} &
Manuscript last updated: \today.\\
$^\dag$ & Aix Marseille Universit\'e, Universit\'e de Toulon, CNRS, CPT,
Marseille, France.\\
& E-mail: yavar.kian@univ-amu.fr\\
$^\ddag$ & Research Center of Mathematics for Social Creativity, Research
Institute for Electronic Science, Hokkaido University, N12W7, Kita-Ward,
Sapporo 060-0812, Japan. E-mail: ykliu@es.hokudai.ac.jp\\
$^\star$ & Graduate School of Mathematical Sciences, The University of
Tokyo, 3-8-1 Komaba, Meguro-ku, Tokyo 153-8914, Japan; Honorary Member
of Academy of Romanian Scientists, Splaiul Independentei Street, No.\! 54,
050094 Bucharest, Romania; Research Center of Nonlinear Problems of
Mathematical Physics, Peoples' Friendship University of Russia (RUDN
University), 6 Miklukho-Maklaya Street, Moscow 117198, Russian Federation.
E-mail: myama@ms.u-tokyo.ac.jp
\end{tabular}}

\renewcommand{\thefootnote}{\arabic{footnote}}

\begin{abstract}
In this article, we investigate inverse source problems for a wide range of PDEs
of parabolic and hyperbolic types as well as time-fractional evolution equations
by partial interior observation. Restricting the source terms to the form of
separated variables, we establish uniqueness results for simultaneously
determining both temporal and spatial components without non-vanishing
assumptions at $t=0$, which seems novel to the best of our knowledge.
Remarkably, mostly we allow a rather flexible choice of the observation time
not necessarily starting from $t=0$, which fits into various situations in
practice. Our main approach is based on the combination of the Titchmarsh
convolution theorem with unique continuation properties and time-analyticity
of the PDEs under consideration.\vskip 4.5mm

\noindent\begin{tabular}{@{}l@{ }p{10cm}} {\bf Keywords }
& Inverse source problem, evolution equation,\\
& uniqueness, Titchmarsh convolution theorem, unique continuation
\end{tabular}

\vskip 4.5mm

\noindent{\bf AMS Subject Classifications } 35R11, 35R30, 35B60

\end{abstract}

\baselineskip 14pt

\setlength{\parindent}{1.5em}

\setcounter{section}{0}

\Section{Introduction}\label{sec-intro}

Let $\Om\subset\BR^d$ ($d=2,3,\ldots$) be a bounded domain whose
boundary $\pa\Om$ is of $C^2$ class. For $f\in C^2(\ov\Om)$, define the
elliptic operators
\[
\cA f(\bm x):=-\rdiv(\bm a(\bm x)\nb f(\bm x))+c(\bm x)f(\bm x),\quad
\cL f(\bm x):=(\cA+\bm b(\bm x)\cdot\nb)f(\bm x),\quad\bm x\in\Om,
\]
where $\cdot$ and $\nb=(\f\pa{\pa x_1},\ldots,\f\pa{\pa x_d})$ refer to the
inner product in $\BR^d$ and the gradient in $\bm x$, respectively. Here
$\bm a=(a_{j k})_{1\le j,k\le d}\in C^1(\ov\Om\,;\BR^{d\times d})$,
$\bm b=(b_1,\ldots,b_d)\in L^\infty(\Om;\BR^d)$ and $c\in L^q(\Om)$ with
$q\in(\f d2,\infty]$ are $\bm x$-dependent matrix-, vector- and scalar-valued
functions, respectively. Further, there exists a constant $\ka>0$ such that
\[
\bm a(\bm x)\bm\xi\cdot\bm\xi\ge\ka|\bm\xi|^2,\ \forall\,\bm x\in\ov\Om\,,
\ \forall\,\bm\xi\in\BR^d,\quad c(\bm x)\ge\ka\ \mbox{a.e. }\bm x\in\Om,
\]
where $|\bm\xi|^2=\bm\xi\cdot\bm\xi$. Meanwhile, for $\be\in(0,2]$ we
denote the $\be$-th order Caputo derivative in $t$ by
\[
\pa_t^\be f(t):=\left\{\!\begin{alignedat}{2}
& \f1{\Ga(\lceil\be\rceil-\be)}\int_0^t
\f{f^{(\lceil\be\rceil)}(\tau)}{(t-\tau)^{\be-\lfloor\be\rfloor}}\,\rd\tau,
& \quad & \be\in(0,1)\cup(1,2),\\
& f^{(\be)}(t), & \quad & \be=1,2,
\end{alignedat}
\right.\quad f\in C^2([0,\infty)),
\]
where $\Ga(\,\cdot\,)$, $\lceil\,\cdot\,\rceil$ and $\lfloor\,\cdot\,\rfloor$ stand
for the Gamma function, the ceiling and the floor functions, respectively.
In this paper, the order of the Caputo derivative is either a
constant in $(0,2]$ or an $x$-dependent piecewise constant function taking
value in $(0,1)$, which will be specified later in Section \ref{sec-main}. We
restrict the discussion of a variable order to $(0,1)$ because there seems very
few works on its generalization e.g.\! to $(0,2)$.

Set $\BR_+:=(0,\infty)$. In this paper, we are concerned with the following
initial-boundary value problem for a (time-fractional) evolution equation
\begin{equation}\label{eq1}
\begin{cases}
\left(\rho(\bm x)\pa_t^{\al(\bm x)}+\cL\right)u(t,\bm x)=\mu(t)h(\bm x),
& (t,\bm x)\in\BR_+\times\Om,\\
u(0,\bm x)=\pa_t^{\lceil\al\rceil-1}u(0,\bm x)=0, & \bm x\in\Om,\\
\cR u(t,\bm x)=0, & (t,\bm x)\in\BR_+\times\pa\Om,
\end{cases}
\end{equation}
where we make the following global assumptions:
\begin{gather}
\mu\in L^1(\BR_+):\mbox{compactly supported},
\quad\exists\,T_0>0\mbox{ s.t. }\mu\not\equiv0\mbox{ in }(0,T_0),\label{t1b}\\
h\in L^2(\Om),\quad\rho\in L^\infty(\Om),\quad
\exists\,\underline\rho\,,\ov\rho>0\mbox{ s.t. }
\underline\rho\le\rho\le\ov\rho\mbox{ a.e.\! in }\Om.\label{eq-rho}
\end{gather}
Here $\cR$ denotes either the Dirichlet boundary condition or the Neumann
boundary condition associated with the principal part $\bm a$ of the elliptic
operator $\cA$, i.e.,
\[
\cR f(\bm x)=f(\bm x)\quad\mbox{or}\quad
\cR f(\bm x)=\bm a(\bm x)\nb f(\bm x)\cdot\bm\nu(\bm x),\quad
\bm x\in\pa\Om,
\]
where $\bm\nu(\bm x)$ is the outward unit normal vector to $\pa\Om$ at
$\bm x\in\pa\Om$. The solution to \eqref{eq1} is understood in
the weak sense, whose precise definition will be given in Definition \ref{d1}
(see Section \ref{sec-main}).

This paper focuses on the uniqueness issue of the following inverse source
problem concerning \eqref{eq1}.

\begin{prob}\label{ISP}
Let $u$ satisfy $\eqref{eq1}$ with \eqref{t1b}--\eqref{eq-rho},
$I\subset\BR_+$ be a finite open interval and $\om\subset\Om$ a nonempty
open subset. Under certain assumptions$,$ determine $h$ with given $\mu$
or determine $h,\mu$ simultaneously by the partial interior observation of $u$
in $I\times\om$.
\end{prob}

The governing equation in \eqref{eq1} takes the form of a rather general
(time-fractional) evolution equation which includes the traditional parabolic
and hyperbolic ones. Correspondingly, Problem \ref{ISP} also covers a wide
range of inverse source problems arising in several scientific areas including
medical imaging, optical tomography, seismology and environmental problems.
For a constant $\al\in(0,2)$, Problem \ref{ISP} corresponds to the recovery of a
source in typical or anomalous diffusion processes appearing in geophysics,
biology and environmental science (see e.g. \cite{JR,NSY}). For $\al=2$, our
inverse problem can be associated with the determination of an acoustic
source with applications in area such as medical imaging and seismology. For
instance, Problem \ref{ISP} with $\al=2$ can be applied to some inverse
diffraction and near-field holography problems (see e.g.
\cite[Chapter 2.2.5]{GC}). We mention also that problem \eqref{eq1} with a
variable order $\al(\bm x)$ is considered as a model for diffusion phenomenon
in some complex media, where the variation of $\al$ is due to the presence of
heterogeneous regions. In that context, Problem \ref{ISP} can be seen as the
determination of a source appearing in a diffusion process associated with
problems in chemistry \cite{CZZ}, biology \cite{GN} and physics \cite{StS, ZLL}.

Among the various formulations of inverse problems, inverse source problems
have gathered consistent popularity owing to their theoretical and practical
significance. The interested reader can refer to \cite{I,LLY2} for an overview of
these problems. For $\al=1,2$, these problems have been studied extensively
in the last decades. Without being exhaustive, we refer to
\cite{cho,IY1,IY2,JLY,KSS,Ya95,yam,Ya1999}, among which the approach of
\cite{IY1,IY2,Ya1999} was based on the Bukhgeim-Klibanov method introduced
in \cite{BK}. For determining the temporal components in source terms of
time-fractional diffusion equations and hyperbolic systems, we mention
\cite{FK,LRY,LZ,SY} and \cite{BHKY,HK,HKLZ}, respectively. In the same spirit,
\cite{Ik,KW} were devoted to the determination of information about the
support of general source terms in parabolic equations, and \cite{AE,EH} dealt
with that of time-dependent point sources. It was proved in \cite{KY2} that
some classes of time-dependent source terms appearing in similar problems to
\eqref{eq1} with a constant $\al\in(0,2)$ can be reconstructed from boundary
measurements when $\Om$ is a cylindrical domain. The approach of
\cite{KY2} has been recently extended by \cite{JKZ} to similar equations with
time-dependent elliptic operators. Meanwhile, in \cite{JK} the authors proved
the recovery of general source terms from the full knowledge of the solution in
a time interval $(t_0,T)$ with $t_0\in(0,T)$. 

Let us emphasize that almost all above mentioned results assumed the
unknown functions to be independent of either the time variable or one space
variable. For the first class of source terms, the known component in the
source term may depend on time. Among all these results, the strategy
allowing the recovery of such classes of source terms requires the
non-vanishing assumption of the known component of the source term at
$t=0$. Recently, \cite{KSXY} established one of the first results of recovering
the source term in \eqref{eq1} without assuming $\mu(0)\ne0$ or more
generally, only requiring $\supp\,\mu\subset[0,T)$. Moreover, \cite{KSXY} also
proved the simultaneous determination of $\mu,h$ in \eqref{eq1} provided that
the restriction of $\mu$ to a subinterval of $\BR_+$ is known and admits an
analytic extension. On the same direction of \cite{KSXY}, in this paper we will
demonstrate the possibility of determining $\mu,h$ simultaneously in more
general settings. By the way, we restrict the source term to the form
$\mu(t)h(\bm x)$ of complete separated variables in view of the obstruction for
this problem described e.g.\! in \cite[\S1.3.1]{KSXY}.

Meanwhile, another highlight of Problem \ref{ISP} is the relaxation of the
observation time. In most literature on inverse source problems, the
observation was assumed to start from $t=0$. However in practice, usually the
data is only available after the occurrence of some unpredictable accidents.
Therefore, it is reasonable to generalize the observation time to a finite interval
$I\subset\BR_+$ not necessarily starting from $t=0$. It turns out that such a
relaxation gives affirmative answers to Problem \ref{ISP} in most situations with
unfortunate exceptions of $\al=1,2$.

The remainder of this article is organized as follows. In Section \ref{sec-main},
we collect the preliminaries to deal with Problem \ref{ISP} and state three main
results on uniqueness according to the choices of $\al$ along with comments
about these results. Then the next three sections are devoted to the proofs for
different cases of $\al$ respectively. Finally, some technical
details will be provided in Section \ref{sec-app}.

\Section{Preliminaries and Main Results}\label{sec-main}

To begin with, we first fix the frequently used notations in the sequel.
Throughout this article, $\BN:=\{1,2,\ldots\}$ stands for positive integers. By
$H^s(\Om)$ ($s\in\BR$), $H_0^1(\Om)$, $W^{1,1}(0,T)$, etc.\! we denote the
usual Sobolev spaces (see Adams \cite{A75}). The weighted $L^2$-space in
$\Om$ with the weight $\rho$ is denoted by $L^2(\Om;\rho\,\rd\bm x)$.
Given a nonempty subset $\om\subset\Om$, the duality pairing between
$C_0^\infty(\om)$ and the space $\cD'(\om)$ of Schwartz distributions is
denoted by ${}_{\cD'(\om)}\langle\,\cdot\,,\,\cdot\,\rangle_{C^\infty_0(\om)}$.
For Banach spaces $X$ and $Y$, we denote the collection of bounded linear
operators from $X$ to $Y$ by $\cB(X,Y)$, which is abbreviated as $\cB(X)$
when $X=Y$. For $f\in L^1(\BR_+)$, its Laplace transform is denoted by
\[
\wh f(p):=\int_0^\infty\e^{-p t}f(t)\,\rd t.
\]
Further notations will be introduced whenever needed.

Before stating the main results, we briefly revisit the definition and the
well-posedness of the governing system \eqref{eq1}.

\begin{defi}\label{d1}
Let the coefficients and the source term in \eqref{eq1} satisfy
\eqref{t1b}--\eqref{eq-rho}. We say that
$u\in L_\loc^1(\BR_+;L^2(\Om))$ is a weak solution to \eqref{eq1} if it satisfies
the following conditions.
\begin{enumerate}
\item[{\rm1.}]
$p_0:=\inf\{\ve>0\mid\e^{-\ve t}u\in L^1(\BR_+;L^2(\Om))\}<\infty$.
\item[{\rm2.}] For all $p>p_0,$ the Laplace transform $\wh u(p;\,\cdot\,)$ of
$u(t,\,\cdot\,)$ with respect to $t$ solves the following boundary value
problem 
\begin{equation}\label{d1a}
\begin{cases}
(\cL+p^\al\rho)\wh u(p;\,\cdot\,)=\wh\mu(p)h & \mbox{in }\Om,\\
\cR\wh u(p;\,\cdot\,)=0 & \mbox{on }\pa\Om.
\end{cases}
\end{equation}
\end{enumerate}
\end{defi}

For a constant $\al\in(0,2)$, the above definition of weak
solutions is equivalent to that of mild solutions (see \cite{KSY,KY,KY2}) with
smooth coefficients. For $\bm b\equiv\bm0$, we refer to
\cite[Theorem 2.3]{KY1} (see also \cite{KY}) for the unique existence of a weak
solution $u\in L^1_\loc(\BR_+;L^2(\Om))$ to \eqref{eq1}. For $\al\in(0,1]$ and
$\bm b\not\equiv\bm0$, we refer to \cite[pp.13--15]{Kia} for the unique
existence of a weak solution $u\in L^1_\loc(\BR_+;H^1(\Om))$ to \eqref{eq1}.
For $\al\in L^\infty(\Om)$ and $\bm b\equiv\bm0$, in
Proposition \ref{pp2} we prove the unique existence of a weak solution
$u\in L^1_\loc(\BR_+;L^2(\Om))$ to \eqref{eq1}. We point out that for
$\al=1,2$, the weak solutions to \eqref{eq1} defined above coincide with the
classical variational solution to the corresponding parabolic and hyperbolic
equations, respectively.

Now we are well prepared to state our main results regarding Problem
\ref{ISP}. Due to the essential difference of the problems and the
corresponding results, we divide the statement into three contexts according
to difference choices of $\al$, namely
\begin{enumerate}
\item $\al\equiv2$,
\item $\al\in(0,2)$ is a constant, and
\item $\al:\Om\longrightarrow(0,1)$ is a piecewise constant.
\end{enumerate}
In each case, we first establish a uniqueness principle for \eqref{eq1} and then,
as a direct consequence, state a uniqueness result regarding Problem \ref{ISP}
\bigskip

{\bf Case 1 } $\al\equiv2$. We denote by $\dist$ the Riemannian distance of
$\ov\Om$ equipped with the metric $g$, where $g=g(\bm x)$ is the inverse of
the matrix $(\rho^{-1}(\bm x)a_{j k}(\bm x))_{1\le j,k\le d}$.

\begin{thm}\label{t1} 
Let $u$ satisfy \eqref{eq1} with $\al\equiv2,$ where we assume
\eqref{t1b}--\eqref{eq-rho} and additionally $\bm b\equiv\bm0,$
$c\in L^\infty(\Om),$ $\rho\in C^1(\ov\Om)$. For any nonempty open subset
$\om\subset\Om,$ if
\begin{equation}\label{t1a}
T\ge T_0+\sup_{\bm x\in\Om}\dist(\bm x,\om),
\quad\mbox{where }\dist(\bm x,\om):=\inf_{\bm y\in\om}\dist(\bm x,\bm y),
\end{equation}
then $u=0$ in $(0,T)\times\om$ implies $h\equiv0$.
\end{thm}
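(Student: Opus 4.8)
The plan is to strip off the temporal factor $\mu$ by Duhamel's principle and then run a sharp unique continuation argument for the resulting homogeneous wave equation. Since $\bm b\equiv\bm0$ we have $\cL=\cA$. I would first introduce $v$, the energy solution of
\[
\begin{cases}
(\rho\pa_t^2+\cA)v=0 & \mbox{in }\BR_+\times\Om,\\
v(0,\cdot)=0,\ \ \pa_t v(0,\cdot)=\rho^{-1}h & \mbox{in }\Om,\\
\cR v=0 & \mbox{on }\BR_+\times\pa\Om,
\end{cases}
\]
so that $v\in C([0,\infty);H^1(\Om))\cap C^1([0,\infty);L^2(\Om))$ (here $\rho^{-1}h\in L^2(\Om)$). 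Taking Laplace transforms one checks that $\wh v(p;\,\cdot\,)$ solves $(\cA+p^2\rho)\wh v(p;\,\cdot\,)=h$ with $\cR\wh v(p;\,\cdot\,)=0$; comparing with Definition \ref{d1} (with $\al\equiv2$) and using uniqueness for this elliptic problem gives $\wh u(p;\,\cdot\,)=\wh\mu(p)\,\wh v(p;\,\cdot\,)$ for all large $p$, whence $u=\mu*v$ as a convolution in $t$. Thus $u=0$ in $(0,T)\times\om$ reads, for a.e.\ $\bm x\in\om$,
\[
\int_0^t\mu(s)\,v(t-s,\bm x)\,\rd s=0\quad\mbox{for a.e.\ }t\in(0,T),
\]
with $v(\,\cdot\,,\bm x)\in L^1(0,T)$ for a.e.\ $\bm x$ by Fubini.

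Next I would invoke the Titchmarsh convolution theorem, applied separately for a.e.\ $\bm x\in\om$. Set $\ell:=\sup\{a\ge0:\mu=0\mbox{ a.e.\ in }(0,a)\}$; assumption \eqref{t1b} forces $\ell<T_0$, since otherwise $\mu\equiv0$ in $(0,T_0)$. For such a fixed $\bm x$, Titchmarsh's theorem yields $a,b\ge0$ (a priori depending on $\bm x$) with $a+b\ge T$ such that $\mu=0$ a.e.\ in $(0,a)$ and $v(\,\cdot\,,\bm x)=0$ a.e.\ in $(0,b)$. By the definition of $\ell$ we must have $a\le\ell$, hence $b\ge T-\ell$. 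Put $T':=T-\ell$; combining \eqref{t1a} with $\ell<T_0$,
\[
T'=T-\ell\ge T_0-\ell+\sup_{\bm x\in\Om}\dist(\bm x,\om)>\sup_{\bm x\in\Om}\dist(\bm x,\om)=:R,
\]
so $v=0$ a.e.\ in $(0,T')\times\om$ with $T'$ strictly larger than the geometric radius $R$ (this strictness is exactly where the non‑vanishing of $\mu$ in $(0,T_0)$ is used).

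Then I would extend $v$ to the two‑sided interval by the odd reflection $V(t,\cdot):=v(t,\cdot)$ for $t\ge0$ and $V(t,\cdot):=-v(-t,\cdot)$ for $t<0$. Because $v(0,\cdot)=0$ and $\pa_t v$ is continuous in $t$ with values in $L^2(\Om)$, the reflected $V$ stays in $C^1$ across $t=0$ and solves $(\rho\pa_t^2+\cA)V=0$ in $(-T',T')\times\Om$ with $\cR V=0$ on the lateral boundary, while $V=0$ in $(-T',T')\times\om$. Since $T'>R$ and the natural geometry of the wave operator $\rho\pa_t^2+\cA$ is precisely the metric $g$ used to define $\dist$, the sharp unique continuation property for the wave equation gives $V=0$ on the influence set $\{(t,\bm x):|t|+\dist(\bm x,\om)<T'\}$, which is an open neighbourhood of $\{0\}\times\Om$. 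Therefore $\pa_t V(0,\cdot)=\rho^{-1}h$ vanishes in $L^2(\Om)$, and since $\rho\ge\underline\rho>0$ we conclude $h\equiv0$.

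I expect the last step to be the crux: the global unique continuation statement with the optimal constant $R=\sup_{\bm x\in\Om}\dist(\bm x,\om)$ is a delicate result (of Carleman‑estimate / Tataru–Robbiano–Zuily–H\"ormander type), and some care is needed under the present coefficient regularity ($\bm a,\rho\in C^1$, $c\in L^\infty$) and to justify that the odd extension $V$ is an admissible weak solution with no spurious source concentrated at $t=0$. The remaining ingredients — the Duhamel factorisation, the $L^1$‑integrability needed to apply Titchmarsh, and the bookkeeping of supports — are routine.
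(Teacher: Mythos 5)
Your proposal follows essentially the same route as the paper's proof: Duhamel factorisation $u=\mu*v$, the Titchmarsh convolution theorem combined with \eqref{t1b} to deduce that $v$ vanishes on a time interval of length at least $\sup_{\bm x\in\Om}\dist(\bm x,\om)$ over $\om$, odd reflection across $t=0$, and the sharp global unique continuation theorem for the wave operator in the metric $g$ (Robbiano--Zuily/Tataru type, as in \cite{KKL,KMO}). The only differences are cosmetic --- you apply Titchmarsh pointwise for a.e.\ $\bm x$ whereas the paper pairs $v$ with a test function $\psi\in C_0^\infty(\om)$ first, and your normalisation $\pa_t v(0,\cdot)=\rho^{-1}h$ together with the strict inequality $T'>R$ is in fact slightly more careful than the paper's bookkeeping --- so the argument is correct.
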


An immediate application of Theorem \ref{t1} to Problem \ref{ISP} indicates the
unique determination of the spatial component $h$ in the source term when
the temporal component $\mu$ is known.

\begin{coro}\label{c1} 
Let the conditions in Theorem $\ref{t1}$ be fulfilled and $u_i$ satisfy
\eqref{eq1} with $\al\equiv2$ and $h_i\in L^2(\Om)$ $(i=1,2),$ where
$\mu\in L^1(\BR_+)$ is known. Then $u_1=u_2$ in
$(0,T)\times\om$ implies $h_1= h_2$ in $\Om$.
\end{coro}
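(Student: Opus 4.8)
The plan is to reduce the inverse-source uniqueness for the wave equation with a separated-variables source $\mu(t)h(\bm x)$ to a sharp unique continuation statement, exploiting finite speed of propagation measured in the Riemannian metric $g$ attached to the principal part. First I would set $v(t,\bm x):=u(t,\bm x)$ and recall that under the present hypotheses ($\al\equiv2$, $\bm b\equiv\bm 0$, $c\in L^\infty$, $\rho\in C^1$) the weak solution $u$ of \eqref{eq1} solves, in the variational sense, $\rho\,\pa_t^2 u+\cA u=\mu(t)h$ with zero Cauchy data at $t=0$ and the boundary condition $\cR u=0$. Since $\mu\in L^1(\BR_+)$ is compactly supported, pick $T_1\ge T_0$ with $\supp\,\mu\subset[0,T_1]$; I would in fact want to use $T_1$ as close to the infimum of such values as the hypothesis \eqref{t1a} permits, but for a first pass taking any admissible $T_1$ with $T\ge T_1+\sup_{\bm x\in\Om}\dist(\bm x,\om)$ suffices once we observe that \eqref{t1b} only guarantees $\mu\not\equiv0$ on $(0,T_0)$, not that $\supp\,\mu=[0,T_0]$; so the cleanest route is to argue by the Titchmarsh convolution theorem in $t$ at the very end rather than to fix $\supp\,\mu$ up front. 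Concretely: assume $u=0$ in $(0,T)\times\om$.

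The core step is a \emph{domain-of-dependence} argument. Finite propagation speed for $\rho\,\pa_t^2-\rdiv(\bm a\nb\,\cdot\,)$ states, in the metric $g=(\rho^{-1}\bm a)^{-1}$, that the value of $u$ at $(t,\bm x)$ depends only on the source in the backward ``light cone'' $\{(s,\bm y): 0\le s\le t,\ \dist(\bm x,\bm y)\le t-s\}$. Dually, $u\equiv 0$ on $(0,T)\times\om$ together with the known homogeneous Cauchy data forces the source $\mu(t)h(\bm x)$ to vanish on the part of $\BR_+\times\Om$ from which $(0,T)\times\om$ can ``hear'' it. More precisely, I would combine $u=0$ on $(0,T)\times\om$ with the homogeneous initial conditions to run a Holmgren–John / Robbiano–Hörmander type unique continuation across the family of surfaces $\{t+\dist(\bm x,\om)=\tau\}$ for increasing $\tau$, concluding that $u\equiv 0$ on the set $\{(t,\bm x)\in\BR_+\times\Om: t+\dist(\bm x,\om)<T\}$, and hence — plugging back into the PDE — that $\mu(t)h(\bm x)=0$ there as well. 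Because $T\ge T_0+\sup_{\bm x\in\Om}\dist(\bm x,\om)$, for every $\bm x\in\Om$ the interval $\{t>0: t+\dist(\bm x,\om)<T\}$ contains $(0,T_0)$, so $\mu(t)h(\bm x)=0$ for a.e.\ $(t,\bm x)\in(0,T_0)\times\Om$. Since $\mu\not\equiv 0$ in $(0,T_0)$ by \eqref{t1b}, there is a positive-measure set of $t$ on which $\mu(t)\ne 0$; dividing, $h(\bm x)=0$ for a.e.\ $\bm x\in\Om$, i.e.\ $h\equiv 0$.

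I would expect the main obstacle to be making the unique-continuation step rigorous at the stated level of regularity: the weak solution lives only in $L^1_{\loc}(\BR_+;L^2(\Om))$, the metric $g$ is merely $C^1$ (so geodesic distance is Lipschitz but the eikonal/pseudoconvexity geometry needs care), and the source $\mu h$ is only $L^1$ in time times $L^2$ in space, so one cannot quote a textbook smooth-coefficient hyperbolic unique continuation theorem directly. The right tool is a Carleman estimate for second-order hyperbolic operators with $C^1$ principal coefficients and the weight built from $\dist(\cdot,\om)$ — this is exactly the geometric quantity appearing in \eqref{t1a}, which strongly suggests the authors intend such an estimate — applied after a density/approximation argument (mollify $\mu$ in $t$, approximate $h$ in $L^2$, use energy estimates and the a priori bound $p_0<\infty$ from Definition \ref{d1} to pass to the limit). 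A subsidiary technical point is the boundary condition: for the Dirichlet case $\cR u=u=0$ on $(0,T)\times\pa\Om$ is immediate and no boundary Carleman term is needed; for the Neumann case one must either assume $\om$ can be chosen so the relevant cones stay interior, or carry the zero Neumann trace through the estimate — I would handle Dirichlet first and remark that the Neumann case is analogous because the conormal derivative $\bm a\nb u\cdot\bm\nu$ also vanishes. Once the unique continuation is in place, the final ``separate variables and divide'' step is routine, and in particular one does not even need the Titchmarsh convolution theorem here, only the weak nonvanishing hypothesis $\mu\not\equiv 0$ on $(0,T_0)$.
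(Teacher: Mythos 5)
Your proposal does not actually address Corollary \ref{c1}: the paper proves the corollary in two lines by setting $u:=u_1-u_2$, noting that $u$ solves \eqref{eq1} with $\al\equiv2$ and source $\mu(t)(h_1-h_2)(\bm x)$ and vanishes on $(0,T)\times\om$, and invoking Theorem \ref{t1}. You never perform this reduction; instead you attempt to reprove Theorem \ref{t1} itself, so at minimum the (trivial) linearity step is missing.

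More seriously, your proof of the underlying theorem has a genuine gap. You propose to run unique continuation (Holmgren--John, Robbiano--H\"ormander, or a Carleman estimate) directly on $u$ across the surfaces $\{t+\dist(\bm x,\om)=\tau\}$ to conclude $u\equiv0$ on $\{t+\dist(\bm x,\om)<T\}$. But every such theorem requires the function being continued to satisfy a homogeneous equation (possibly with lower-order terms in $u$) in the region of continuation; here $u$ satisfies $\rho\,\pa_t^2u+\cA u=\mu(t)h(\bm x)$ with an \emph{unknown, generically nonzero} right-hand side outside $\om$ --- detecting that source is the whole point of the theorem. Knowing $u=0$ on $(0,T)\times\om$ only yields $\mu h=0$ on $(0,T)\times\om$ from the equation itself; it gives no license to propagate the zero set of $u$ into the region where $h$ may live. (Given any $u_0$ vanishing on $(0,T)\times\om$ but not elsewhere, $u_0$ solves the wave equation with source $F:=\rho\,\pa_t^2u_0+\cA u_0$; unique continuation in the presence of an unrestricted source is simply false, so a correct argument must exploit the separated structure $\mu(t)h(\bm x)$ and the zero Cauchy data \emph{before} continuing.) This is exactly what the paper's proof does and what you declare unnecessary: by Duhamel, $u(t,\cdot)=\int_0^t\mu(t-\tau)\,v(\tau,\cdot)\,\rd\tau$ with $v$ the \emph{source-free} wave generated by the initial velocity $h$; the Titchmarsh convolution theorem converts $u=0$ on $(0,T)\times\om$ into $v=0$ on $(0,T_1)\times\om$ with $T_1\ge T-T_0\ge\sup_{\bm x\in\Om}\dist(\bm x,\om)$, and only then is unique continuation (after odd extension in $t$) applied to $v$, yielding $h=\pa_tv(0,\cdot)=0$ on all of $\Om$. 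Your closing remark that the Titchmarsh theorem is not needed is the symptom of the gap: it is precisely the step that removes the unknown source and makes unique continuation applicable.
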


To the best of our knowledge, the uniqueness principle of
Theorem \ref{t1} and the associated uniqueness in Corollary \ref{c1} are the
first results on determining the spatial component $h$ of the source term in a
hyperbolic equation without assuming $\mu(0)\ne0$ or $T=\infty$. Indeed, it
seems that all other similar results required either some non-vanishing
conditions at $t=0$ (see e.g. \cite{IY2,JLY,Ya95,Ya1999}) or infinite
observation time (e.g. \cite{HK}). Especially, the non-vanishing condition in our
context reads $\mu(0)\ne0$, which restricts the problem under consideration
to such a situation that the phenomenon of interest should start before
observation. By removing this condition, we make the results of Theorem
\ref{t1} and Corollary \ref{c1} more flexible to allow the measurement to start
before the appearance of some unknown phenomenon.\bigskip

{\bf Case 2 } $\al\in(0,2)$ is a constant.

\begin{thm}\label{t2}
Let $u$ satisfy \eqref{eq1} with a constant $\al\in(0,2),$ where we assume
\eqref{t1b}--\eqref{eq-rho} and $h=0$ in $\om$. Moreover$,$
we restrict $\rho\equiv1$ if $\al\in(0,1]$ and $\bm b\equiv\bm0$ if
$\al\in(1,2)$. Then for any nonempty open subset $\om\subset\Om,$
\begin{enumerate}
\item[{\rm(1)}] If $\al\ne1,$ then for any $T\ge T_0$ and any $T_1\in[0,T),$
the following implication holds true.
\begin{equation}\label{tt2aa}
(u=0\mbox{ in }(T_1,T)\times\om)
\quad\Longrightarrow\quad(h\equiv0\mbox{ in }\Om).
\end{equation}
\item[{\rm(2)}] If $\al=1,$ then the implication \eqref{tt2aa} holds true provided
that $T_1=0$.
 \end{enumerate}
 \end{thm}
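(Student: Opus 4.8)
The plan is to reduce everything to the boundary value problem \eqref{d1a} for the Laplace transform $\wh u(p;\,\cdot\,)$, and then to exploit the time-analyticity and the structure $\wh\mu(p)h$ on the right-hand side. First I would take the Laplace transform of the observation: since $u=0$ in $(T_1,T)\times\om$, I cannot directly conclude $\wh u(p;\,\cdot\,)=0$ in $\om$ (the transform integrates over all of $\BR_+$). The correct route is instead to use time-analyticity of $t\mapsto u(t,\bm x)$ on $\BR_+$ with values in $L^2(\om)$ — a property that holds for the time-fractional equation with constant order $\al\in(0,2)$ because the solution operator is given by Mittag-Leffler-type families that are analytic in $t$ for $t>0$. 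Hence $u=0$ in $(T_1,T)\times\om$ propagates to $u=0$ in $\BR_+\times\om$ (using that $(T_1,T)$ has nonempty interior), and only then may I Laplace-transform to get $\wh u(p;\,\cdot\,)=0$ in $\om$ for all $p$ in a right half-plane, and by analyticity in $p$ for all $p$ in the domain of definition.

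Next, with $\wh u(p;\,\cdot\,)=0$ in $\om$, I would invoke the unique continuation property for the elliptic operator $\cL+p^\al\rho$. The equation $(\cL+p^\al\rho)\wh u(p;\,\cdot\,)=\wh\mu(p)h$ holds in $\Om$ with $h=0$ in $\om$; therefore $\wh u(p;\,\cdot\,)$ solves a homogeneous second-order elliptic equation in $\om$ and vanishes on the open set $\om$, so by unique continuation $\wh u(p;\,\cdot\,)\equiv0$ in $\Om$ (here one needs $\Om$ connected, or rather one argues component by component; the connectedness is part of ``domain''). Feeding this back into \eqref{d1a} gives $\wh\mu(p)h\equiv0$ in $\Om$ for all admissible $p$. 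Since $\mu\not\equiv0$ in $(0,T_0)$ and $\mu$ is compactly supported in $[0,\infty)$, its Laplace transform $\wh\mu$ is an entire function that is not identically zero, hence $\wh\mu(p)\ne0$ for all but a discrete set of $p$; picking any such $p$ yields $h\equiv0$ in $\Om$. This proves part (1) once I have the analyticity step, and it also covers the endpoint $T=T_0$ since all that was used is $\mu\not\equiv0$ on $(0,T_0)$ together with $T\ge T_0$.

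For part (2), the case $\al=1$, the obstruction is precisely the time-analyticity argument: the heat semigroup $\e^{-t\cA}$ is analytic in $t$ for $\re t>0$, so the same propagation $u=0$ in $(T_1,T)\times\om \Rightarrow u=0$ in $\BR_+\times\om$ does hold — so why the restriction $T_1=0$? The point is more subtle: for $\al=1$ the well-posedness and the Laplace-transform characterization are genuinely one-sided, and more importantly the backward uniqueness needed to rule out a nonzero contribution from $(0,T_1)$ is not available through the Titchmarsh/analyticity mechanism in the same clean way. Concretely, when $T_1=0$ one has $u=0$ on all of $(0,T)\times\om$ and can run the argument above verbatim; when $T_1>0$ the heat equation's infinite speed of propagation together with its strong smoothing means the earlier-time behavior is not controlled, and in fact one expects genuine counterexamples. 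So for $\al=1$ I would simply present the $T_1=0$ argument, which is the special case of the reasoning in part (1) with $T_1$ removed from the hypothesis, and note (as the paper does in its introductory remarks about the ``unfortunate exceptions of $\al=1,2$'') that the relaxation fails there.

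The main obstacle, and the step that deserves the most care, is establishing the time-analyticity of $t\mapsto u(t,\cdot)\in L^2(\om)$ on $\BR_+$ — including a precise statement of in which sector or interval analyticity holds — and then justifying that real-analyticity on the subinterval $(T_1,T)$ forces vanishing on all of $\BR_+$. This requires knowing that the weak solution of Definition \ref{d1} is represented by an operator family analytic in $t>0$ (for constant $\al\in(0,2)$ this follows from eigenfunction expansions with Mittag-Leffler functions $E_{\al,1}(-\la_n t^\al)$, or from resolvent/Dunford-integral representations), and a little argument that the source term $\mu(t)h(\bm x)$, being only $L^1$ in $t$, does not destroy this analyticity away from $\supp\,\mu$ — but since $\mu$ is compactly supported, for large $t$ the solution is a pure ``free evolution'' of the state at the right end of $\supp\,\mu$, which is analytic, and this suffices to propagate the zero to $t\to\infty$ and hence, again by analyticity, to all $t>0$. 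Everything else (Laplace transform, unique continuation for $\cL+p^\al\rho$, nonvanishing of $\wh\mu$) is standard.
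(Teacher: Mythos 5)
Your proposal has several genuine gaps, the most serious being the very first step. You claim that $t\mapsto u(t,\cdot)$ is analytic on $\BR_+$ with values in $L^2(\om)$ because the solution operator is a Mittag-Leffler family. But $u$ is the convolution $u(t,\cdot)=\int_0^t\mu(t-\tau)S_1(\tau)h\,\rd\tau$ of the (analytic) Duhamel kernel with a function $\mu$ that is merely $L^1$ and compactly supported; $u$ is therefore \emph{not} time-analytic on $\supp\mu$, and the theorem explicitly allows the observation window $(T_1,T)$ to sit inside $\supp\mu$ (the paper stresses that it removes the hypothesis $\supp\mu\subset[0,T)$, and even allows measurement inside $(0,T_0)$ where $\mu\not\equiv0$). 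Your fallback — that for large $t$ the solution is a free evolution — does not help, since $(T_1,T)$ need not meet the region $t>\sup\supp\mu$, and analyticity there cannot be used to propagate a zero set located where $u$ is not analytic. The paper's actual mechanism for the backward extension from $(T_1,T)$ to $(0,T)$ is entirely different: using $h=0$ in $\om$ and the equation, one shows $u_\psi=D_t^\al u_\psi=0$ on $(T_1,T)$ together with $I^{1-\al}u_\psi\in W^{1,1}(0,T)$, and then invokes the Kinash--Janno uniqueness theorem for the fractional derivative; this is precisely where $\al\ne1$ enters and why part (2) requires $T_1=0$. Your proposal also never decouples $\mu$ from $h$: the paper does this via the Titchmarsh convolution theorem applied to $0=\int_0^t\mu(t-\tau)v_\psi(\tau)\,\rd\tau$, which, combined with $\mu\not\equiv0$ on $(0,T_0)$ and $T\ge T_0$, yields $S_1(t)h=0$ in $\om$ on a nontrivial interval $(0,\tau_2)$; only \emph{then} is time-analyticity used — applied to $S_1(t)h$, not to $u$.

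Your endgame is also flawed. From $\wh u(p;\cdot)=0$ in $\om$ and $(\cL+p^\al\rho)\wh u(p;\cdot)=\wh\mu(p)h$ with $h=0$ in $\om$, you conclude by elliptic unique continuation that $\wh u(p;\cdot)\equiv0$ in $\Om$. But unique continuation only propagates the zero set through the region where the \emph{homogeneous} equation holds, i.e.\ through $\{h=0\}$; it gives nothing on $\supp h$, which is exactly the set you are trying to show is empty. The paper avoids this by a change of variable $p\mapsto p^{1/\al}$ identifying $\wh v(p^{1/\al};\cdot)$ with the Laplace transform of the solution $w$ of a parabolic problem with \emph{initial datum} $h$ and homogeneous equation throughout $(0,\infty)\times\Om$; then the Saut--Scheurer parabolic unique continuation applies globally and yields $h=w(0,\cdot)\equiv0$. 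So the overall architecture of the paper's proof (fractional backward uniqueness, then Titchmarsh, then analyticity of the kernel, then reduction to a homogeneous parabolic problem) is absent from your proposal, and the steps you substitute for it do not survive scrutiny.
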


Applying Theorem \ref{t2}, we can prove the uniqueness for determining
$\mu$ and $h$ simultaneously in the source term of \eqref{eq1}, provided 
that $\mu$ satisfies \eqref{t1b} and is known in $(0,T_0)$.

\begin{coro}\label{c2}
Let the conditions in Theorem $\ref{t2}$ be fulfilled and $u_i$ satisfy
\eqref{eq1} with a constant $\al\in(0,2),$ $\mu_i\in L^1(\BR_+)$
being compactly supported and $h_i\in L^2(\Om)\ (i=1,2),$ where we assume
\begin{gather}
\mu_1\mbox{ satisfies \eqref{t1b}},\quad\mu_1=\mu_2\mbox{ in }(0,T_0),
\label{c2a}\\
h_1=h_2\mbox{ in }\om,\quad h_1\not\equiv0\mbox{ in }\Om.\label{c2aa}
\end{gather}
Then
\begin{enumerate}
\item[{\rm(1)}] If $\al\ne1,$ then for any $T\ge T_0$ and any $T_1\in[0,T_0),$
the following implication holds true.
\begin{equation}\label{c2b}
(u_1=u_2\mbox{ in }(T_1,T)\times\om)
\quad\Longrightarrow\quad(\mu_1\equiv\mu_2\mbox{ in }(0,T)
\mbox{ and }h_1\equiv h_2\mbox{ in }\Om).
\end{equation}
\item[{\rm(2)}] If $\al=1,$ then the implication \eqref{c2b} holds true provided
that $T_1=0$.
\end{enumerate}
\end{coro}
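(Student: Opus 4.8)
The plan is to deduce Corollary~\ref{c2} from Theorem~\ref{t2}, by combining the causality of the forward problem \eqref{eq1} with the Titchmarsh convolution theorem. Write $v:=u_1-u_2$, $g:=h_1-h_2$, $\nu:=\mu_1-\mu_2$. The preliminary observation is causality: by Definition~\ref{d1} and the shift property of the Laplace transform, if a separated source $\mu(t)h(\bm x)$ has $\mu\equiv0$ on $(0,\tau)$, then the weak solution of \eqref{eq1} vanishes on $(0,\tau)\times\Om$. Writing the source of $v$ as $\mu_1 h_1-\mu_2 h_2=\mu_1 g+(\mu_1-\mu_2)h_2$ and using $\mu_1=\mu_2$ on $(0,T_0)$ from \eqref{c2a}, linearity and causality give $v=\wt v$ on $(0,T_0)\times\Om$, where $\wt v$ is the weak solution of \eqref{eq1} with the \emph{separated} source $\mu_1(t)\,g(\bm x)$.

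\emph{Recovery of $h$.} By \eqref{c2aa}, $g=0$ in $\om$, and $\mu_1$ satisfies \eqref{t1b}. Since $T_1<T_0\le T$, the hypothesis $u_1=u_2$ in $(T_1,T)\times\om$ yields $\wt v=0$ in $(T_1,T_0)\times\om$ (and in $(0,T_0)\times\om$ in case~(2), where $T_1=0$). Applying Theorem~\ref{t2} to $\wt v$ with $T$ replaced by $T_0$ and observation interval $(T_1,T_0)$ — admissible because for $\al\ne1$ the theorem allows $T_1\in[0,T_0)$ while for $\al=1$ we have imposed $T_1=0$ — gives $g\equiv0$ in $\Om$, i.e.\ $h_1=h_2=:h$ in $\Om$, with $h\not\equiv0$ in $\Om$ by \eqref{c2aa}.

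\emph{Recovery of $\mu$.} Now $v$ solves \eqref{eq1} with separated source $\nu(t)h(\bm x)$, and $\nu$ is compactly supported with $\nu\equiv0$ on $(0,T_0)$; by causality $v=0$ in $(0,T_0)\times\Om$, so together with $v=0$ in $(T_1,T)\times\om$ and $T_1<T_0$ one gets $v=0$ in $(0,T)\times\om$. Let $G$ be defined by $\wh G(p,\,\cdot\,)=(\cL+p^\al\rho)^{-1}h$, so that $v=\nu\ast G$; by the time-analyticity underlying Theorem~\ref{t2}, $t\mapsto G(t,\,\cdot\,)\in L^2(\Om)$ is real-analytic on $\BR_+$. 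Suppose, for contradiction, $\nu\not\equiv0$ on $(0,T)$, so $T_0\le\inf\supp\nu<T$. Pairing $\nu\ast G=0$ on $(0,T)$ with an arbitrary $\phi\in L^2(\om)$ and invoking the Titchmarsh convolution theorem for the scalar relation $\nu\ast\langle G(\cdot,\,\cdot\,),\phi\rangle_{L^2(\om)}=0$ on $(0,T)$: if $\langle G(\cdot,\,\cdot\,),\phi\rangle_{L^2(\om)}\not\equiv0$ then, being real-analytic, the infimum of its support is $0$, forcing $\inf\supp\nu\ge T$ — a contradiction; hence $G=0$ in $\BR_+\times\om$. Taking Laplace transforms, $(\cL+p^\al\rho)^{-1}h$ vanishes in $\om$, and since it then solves $\cL(\,\cdot\,)+p^\al\rho(\,\cdot\,)=h$ while vanishing on the open set $\om$, we get $h=0$ in $\om$. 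But then $v$ solves \eqref{eq1} with $h=0$ in $\om$, with $\nu$ satisfying \eqref{t1b} (threshold chosen in $(\inf\supp\nu,T)$), and $v=0$ in $(0,T)\times\om$; so Theorem~\ref{t2} forces $h\equiv0$ in $\Om$, contradicting $h\not\equiv0$. Hence $\nu\equiv0$ on $(0,T)$, i.e.\ $\mu_1\equiv\mu_2$ on $(0,T)$, which together with $h_1=h_2$ in $\Om$ from the previous step is the assertion.

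I expect the Laplace/convolution identities and the verification of \eqref{t1b} for shifted temporal data to be routine. The delicate point is orchestrating the two applications of Theorem~\ref{t2} — once on the auxiliary separated-source problem $\wt v$, once inside the contradiction in the degenerate case where $h$ vanishes in $\om$ — so that the admissible range of $T_1$ matches the dichotomy $\al\ne1$ versus $\al=1$; this is exactly why case~(2) must require $T_1=0$. A minor technical nuisance is the integrability of $G$ near $t=0$ in the fractional range, which is harmless since $\supp\nu\subset[T_0,\infty)$ with $T_0>0$.
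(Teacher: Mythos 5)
Your proposal is correct and follows essentially the same route as the paper's proof: the same source decomposition via \eqref{c2a}--\eqref{c2aa} combined with causality to reduce the recovery of $h$ to Theorem \ref{t2}, followed by the Titchmarsh convolution theorem and time-analyticity of the kernel for the recovery of $\mu$. The only notable (but cosmetic) difference is the final contradiction: where the paper invokes the analyticity-plus-unique-continuation machinery from Step 3 of the proof of Theorem \ref{t2} to conclude $h\equiv0$ directly from $S_1(t)h=0$ in $\om$, you first deduce $h=0$ in $\om$ from the Laplace-transformed elliptic equation and then re-apply Theorem \ref{t2} as a black box, which is an equally valid way to reach the same contradiction.
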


In \cite{JLLY,KSXY}, similar results to Theorem \ref{t2} were
established by requiring $T_1=0$. In this sense, Theorem \ref{t2} greatly
improves the flexibility in the choice of the observation time. On the other
hand, Theorem \ref{t2} extends the uniqueness principle in
\cite[Theorem 1.1]{KSXY} by removing the requirement
$\supp\,\mu\subset[0,T)$. Actually, the uniqueness principle of Theorem
\ref{t2} even holds true with measurement taken in $(0,T_0)$ where $\mu$ is
not uniformly vanishing (see \eqref{t1b}). As a direct consequence of Theorem
\ref{t2}, Corollary \ref{c2} claims the uniqueness of the simultaneous
determination of both temporal and spatial components of the source term
under the tolerable extra assumptions \eqref{c2a}--\eqref{c2aa}. To the best of
our knowledge, Corollary \ref{c2} is the first result on completely determining a
source term of separated variables stated in such a general context. The only
comparable result seems to be \cite[Theorem 1.3]{KSXY}, which additionally
requires that the restriction of $\mu$ to $(0,T_0)$ admits a holomorphic
extension to some neighborhood of $\BR_+$. In that sense, Corollary \ref{c2}
generalizes \cite[Theorem 1.3]{KSXY} considerably. In addition, for
$\al\in(0,1]$ we also allow the presence of a convection term $\bm b$ in
\eqref{eq1}, which breaks the symmetry of the elliptic part and thus any
solution representation of \eqref{eq1} using the eigensystem. Therefore, instead
of the method in \cite{KSXY}, we adopt the idea in \cite{JLLY} to combine the
time-analyticity of the solution with the unique continuation property of
parabolic equations.\bigskip

{\bf Case 3 } $\al:\Om\longrightarrow(0,1)$ is a piecewise constant. More
precisely, we assume that for a fixed $N\in\BN$, there exist constants
$\al_\ell\in(0,1)$ and open subdomains $\Om_\ell\subset\Om$
($\ell=1,2,\ldots,N$) with Lipschitz boundaries such that
\begin{gather}
\ov\Om=\bigcup_{\ell=1}^N\ov{\Om_\ell}\,,
\quad\Om_\ell\cap\Om_m=\emptyset\ (\ell\ne m),\label{bb}\\
\al(\bm x)=\al_\ell\ (\bm x\in\Om_\ell,\ \ell=1,\ldots,N),
\quad 0<\al_1<\al_2<\cdots<\al_N<\min\{2\al_1,1\}.\label{vo}
\end{gather}

\begin{thm}\label{t3}
Let $u$ satisfy \eqref{eq1} with $\al\in L^\infty(\Om)$ satisfying
{\rm\eqref{bb}--\eqref{vo},} where we assume
\eqref{t1b}--\eqref{eq-rho}, $\bm b\equiv\bm0$ and there exists
an open subdomain $\cO\subset\Om$ such that
\begin{equation}\label{t3b}
\left(\bigcup_{\ell=1}^N\pa\Om_\ell\right)\setminus\pa\Om\subset\cO,
\quad h=0\mbox{ in }\cO.
\end{equation}
Then for any open subset $\om\subset\Om$ satisfying
\begin{equation}\label{t3aa}
h=0\mbox{ in }\om,\quad\om\cap\cO\ne\emptyset,
\end{equation}
any $T\ge T_0$ and any $T_1\in[0,T),$ the implication \eqref{tt2aa} holds
true.
\end{thm}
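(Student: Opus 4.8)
The plan is to reduce Theorem~\ref{t3} to an application of the Titchmarsh convolution theorem combined with unique continuation for the associated time-fractional equations of constant orders, as in the proof of Theorem~\ref{t2}, but taking care of the interface structure dictated by the piecewise-constant order $\al$. First I would fix notation: write $v:=u$, so $v$ vanishes in $(T_1,T)\times\om$. By the time-analyticity of the weak solution to \eqref{eq1} (which, for $\bm b\equiv\bm0$ and $\al\in L^\infty(\Om)$ taking values in $(0,1)$, follows from Proposition~\ref{pp2} and the Laplace-transform characterization in Definition~\ref{d1}), the map $t\mapsto v(t,\bm x)$ extends analytically in $t$ into a complex neighborhood of $(0,\infty)$ for a.e.\ $\bm x\in\Om$. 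Hence $v=0$ in $(T_1,T)\times\om$ forces $v=0$ in $(0,\infty)\times\om$ — here one uses $T\ge T_0$ together with the non-vanishing of $\mu$ in $(0,T_0)$ only at the very end; the analytic continuation step itself just needs $T_1<T$.

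Next I would exploit the PDE inside $\om$. Since $h=0$ in $\om$ by \eqref{t3aa}, the function $v$ satisfies $(\rho\,\pa_t^{\al(\bm x)}+\cL)v=0$ in $\BR_+\times\om$, and $v\equiv0$ there. Now comes the interface argument, which I expect to be the main obstacle. The set $\om\cap\cO$ is a nonempty open subset of $\cO$, and by \eqref{t3b} the interior interfaces $\bigl(\bigcup_\ell\pa\Om_\ell\bigr)\setminus\pa\Om$ all lie in $\cO$, while $h=0$ in $\cO$. I would use the known unique continuation property (UCP) for each constant-order time-fractional equation $\rho\,\pa_t^{\al_\ell}w+\cA w=0$ to propagate the vanishing of $v$ from $\om\cap\cO$ through $\cO$, and then across each interface $\pa\Om_\ell$: on a small ball straddling an interface, contained in $\cO$ (so $h=0$ there) and in which $\al$ takes only two values $\al_\ell\ne\al_m$, the relation $\rho(\pa_t^{\al_\ell}v\,\mathds 1_{\Om_\ell}+\pa_t^{\al_m}v\,\mathds 1_{\Om_m})+\cA v=0$ together with $v=0$ on the $\Om_\ell$-side and the matching of $v$ and its conormal derivative across the (Lipschitz) interface lets one conclude $v=0$ on a neighborhood on the $\Om_m$-side as well. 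Iterating over the finitely many subdomains $\Om_1,\dots,\Om_N$, which are connected up through $\cO$ by \eqref{t3b}, yields $v\equiv0$ in $\BR_+\times\Om$.

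Once $v=u\equiv0$ in $\BR_+\times\Om$, I would feed this back into the governing equation: $0=(\rho\,\pa_t^{\al(\bm x)}+\cL)u=\mu(t)h(\bm x)$ in $\BR_+\times\Om$, so $\mu(t)h(\bm x)=0$ a.e. Since $\mu\not\equiv0$ in $(0,T_0)$ by \eqref{t1b}, there is a set of positive measure in $t$ on which $\mu\ne0$, forcing $h=0$ a.e.\ in $\Om$; this is exactly the implication \eqref{tt2aa}. Alternatively, and perhaps cleaner, I would run the Titchmarsh convolution theorem on the Laplace-transform side: from \eqref{d1a} with $\wh u(p;\cdot)\equiv0$ on $\om$ one derives a convolution identity in $t$ of the form $\mu * (\text{kernel involving }h)=0$, whose left end-of-support must be the sum of the two left endpoints; since $\supp\mu$ begins at $0$ (as $\mu\ne0$ near $0$), the kernel term is forced to vanish identically, giving $h\equiv0$.

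The genuinely delicate point is the transmission/interface step: one must be sure that the UCP for constant-order time-fractional diffusion (which is available in the literature for $\al\in(0,1)$ under the present regularity of coefficients) can be applied in thin neighborhoods of the Lipschitz interfaces where $h=0$, and that the weak formulation encodes the correct transmission conditions ($[v]=0$ and $[\bm a\nb v\cdot\bm\nu]=0$ across $\pa\Om_\ell$) so that vanishing on one side truly propagates to the other. The ordering condition $0<\al_1<\cdots<\al_N<\min\{2\al_1,1\}$ in \eqref{vo} is presumably what guarantees the well-posedness and analyticity used above (via Proposition~\ref{pp2}); I would cite that proposition rather than reprove it, and concentrate the write-up on the continuation-through-interfaces chain together with the final Titchmarsh/measure-theoretic deduction of $h\equiv0$.
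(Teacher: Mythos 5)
Your overall strategy (Titchmarsh plus unique continuation plus time-analyticity) points in the right direction, but two of your key steps do not hold as stated. First, the solution $u$ is \emph{not} time-analytic: it is the convolution $u(t,\cdot)=\int_0^t\mu(t-\tau)\,S(\tau)(\rho^{-1}h)\,\rd\tau$ of the analytic kernel $S(\cdot)(\rho^{-1}h)$ with a merely compactly supported $\mu\in L^1(\BR_+)$, and such a convolution has no reason to extend analytically on the interval $(0,T_0)$ where $\mu$ is active --- which is exactly where you need to continue the vanishing backwards from $(T_1,T)$ to $(0,T)$. The paper's Step~1 instead localizes with a test function $\psi\in C_0^\infty(\om\cap\Om_\ell)$, shows that $u_\psi=D_t^{\al_\ell}u_\psi=0$ on $(T_1,T)$ with $I^{1-\al_\ell}u_\psi\in W^{1,1}(0,T)$, and invokes the backward-uniqueness result of \cite{JK} for the fractional ODE to conclude $u_\psi=0$ on all of $(0,T)$; analyticity is used only later, and only for the kernel $v(t)=S(t)(\rho^{-1}h)$ after the Titchmarsh convolution theorem has decoupled it from $\mu$.

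Second, your interface argument cannot close. A local unique continuation property for time-fractional equations across a hypersurface (with transmission conditions) is not available in this generality, and even if it were, it would only let you propagate vanishing through $\cO$ and thin neighborhoods of the interfaces, where $h=0$; it cannot penetrate the interior of the subdomains $\Om_\ell$ where $\mu h$ need not vanish, so you never reach $v\equiv0$ in $\BR_+\times\Om$, and the deduction $\mu h=0$ collapses. The paper avoids this entirely by passing to the Laplace transform: $\wh v(p;\cdot)$ solves the \emph{elliptic} problem $(p^{\al}\rho+\cA)\wh v=h$ in $\Om$ with $\cR\wh v=0$ on $\pa\Om$ and vanishes on $\om$; elliptic unique continuation (Saut--Scheurer) applied in $\cO$, where the equation is homogeneous, gives $\wh v(p;\cdot)=0$ in $\cO$, whence $\cR\wh v(p;\cdot)=0$ on each $\pa\Om_\ell$ by \eqref{t3b}; then on each $\Om_\ell$ the order is the constant $\al_\ell$ and the constant-order argument (Step~3 of the proof of \cite[Theorem 1.1]{KSXY}, essentially a large-$p$ asymptotic/spectral analysis) yields $h=0$ in $\Om_\ell$. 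Your closing remark that ``the kernel term is forced to vanish identically, giving $h\equiv0$'' conflates vanishing of $S(t)(\rho^{-1}h)$ on $\om$ (where $h=0$ by hypothesis anyway) with vanishing of $h$ on all of $\Om$; the entire elliptic-UCP-plus-subdomain analysis is what bridges that gap.
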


For a better understanding of the readers, in Figure \ref{fig-geo} we illustrate a
typical situation of the geometrical assumptions in Theorem \ref{t3}.
\begin{figure}[htbp]\centering
\input{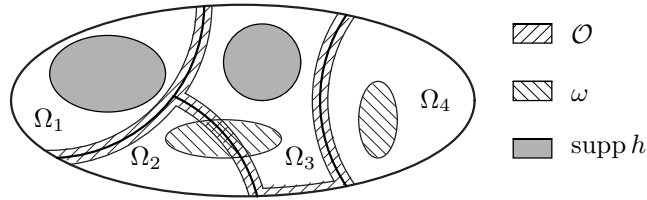}
\caption{A typical situation of the geometrical assumptions in Theorem \ref{t3}
with $N=4$. The subdomain $\cO$ is a connect open neighborhood of the
interior boundaries of $\Om_\ell$, which $\om$ should intersect. The support
of $h$ and $\cO\cup\om$ are disjoint.}\label{fig-geo}
\end{figure}

Parallel to that of Corollary \ref{c2}, we can apply Theorem \ref{t3} to prove
the unique determination of $\mu,h$ simultaneously in the source term of
\eqref{eq1} under the same additional condition on $\mu$.

\begin{coro}\label{c3} 
Let the conditions in Theorem $\ref{t3}$ be fulfilled and $u_i$ satisfy
\eqref{eq1} with $\al\in L^\infty(\Om)$ satisfying {\rm\eqref{bb}--\eqref{vo},}
$\mu_i\in L^1(\BR_+)$ being compactly supported and
$h_i\in L^2(\Om)\ (i=1,2),$ where we assume \eqref{c2a} and
\begin{equation}\label{eq-assume-h}
h_1=h_2\mbox{ in }\om\cup\cO,\quad h_1\not\equiv0\mbox{ in }\Om.
\end{equation}
Then for any open subdomain $\om\subset\Om$ satisfying $\eqref{t3aa},$ any
$T\ge T_0$ and any $T_1\in[0,T_0),$ the implication \eqref{c2b} holds true.
\end{coro}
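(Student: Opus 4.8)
The plan is to first determine $h$, by reducing to Theorem \ref{t3}, and then to determine $\mu$, by coupling the Titchmarsh convolution theorem with the unique continuation and time-analyticity arguments already needed for Theorem \ref{t3}. Throughout I write $v:=u_1-u_2$, and I use the \emph{causality} of \eqref{eq1} (a source supported in $[T_0,\infty)$ produces a solution supported in $[T_0,\infty)$, which is read off from Definition \ref{d1} by pulling $\e^{-pT_0}$ out of the Laplace transform, and is recorded in Section \ref{sec-app}).

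\textbf{Recovery of $h$.} Let $z$ be the weak solution of \eqref{eq1} (with the same $\al$, coefficients and boundary operator) and source $\mu_2(t)h_1(\bm x)$; this exists by Proposition \ref{pp2}. Then $u_1-z$ is the weak solution with source $(\mu_1-\mu_2)(t)h_1(\bm x)$, which by \eqref{c2a} vanishes on $(0,T_0)\times\Om$, so causality gives $u_1-z\equiv0$ on $(0,T_0)\times\Om$. Hence $v=z-u_2$ on $(T_1,T_0)\times\om$, which is nonempty since $T_1<T_0$, and the assumption $u_1=u_2$ on $(T_1,T)\times\om\supset(T_1,T_0)\times\om$ yields $z-u_2\equiv0$ on $(T_1,T_0)\times\om$. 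Now $z-u_2$ solves \eqref{eq1} with the \emph{separated} source $\mu_2(t)(h_1-h_2)(\bm x)$: $\mu_2$ is compactly supported and, coinciding with $\mu_1$ on $(0,T_0)$, satisfies \eqref{t1b} (with the same $T_0$); and by \eqref{eq-assume-h} one has $h_1-h_2=0$ on $\om\cup\cO$, so that $h_1-h_2$ fulfils the conditions \eqref{t3b} and \eqref{t3aa} required of $h$, with $\om\cap\cO\ne\emptyset$. Applying Theorem \ref{t3} to $z-u_2$ with observation on $(T_1,T_0)\times\om$ — admissible because $T_0\ge T_0$ and $T_1\in[0,T_0)$ — gives $h_1\equiv h_2$ in $\Om$.

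\textbf{Recovery of $\mu$.} Write $h:=h_1=h_2$, which is nontrivial by \eqref{eq-assume-h}, and $g:=\mu_1-\mu_2$, compactly supported with $g=0$ on $(0,T_0)$ by \eqref{c2a}. Then $v$ solves \eqref{eq1} with the separated source $g(t)h(\bm x)$, which vanishes on $(0,T_0)\times\Om$; causality again gives $v\equiv0$ on $(0,T_0)\times\Om$, and combined with $v\equiv0$ on $(T_1,T)\times\om$ and $T_1<T_0\le T$ this yields $v\equiv0$ on $(0,T)\times\om$. It remains to show $g\equiv0$ on $(0,T)$, i.e.\ $\mu_1\equiv\mu_2$ on $(0,T)$, which together with the previous step establishes \eqref{c2b}. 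I would argue by contradiction. Writing $v(\cdot,\bm x)=g*V(\cdot,\bm x)$, where $V(\cdot,\bm x):=\mathcal L^{-1}\big[(\cL+p^\al\rho)^{-1}h\big](\bm x)\in L^1_\loc([0,\infty))$ is the impulse response (well defined via the resolvent bounds behind Proposition \ref{pp2}), the Titchmarsh convolution theorem on $(0,T)$ gives $\inf\supp g+\inf\supp V(\cdot,\bm x)\ge T$ for a.e.\ $\bm x\in\om$; if $g\not\equiv0$ on $(0,T)$ this forces $V\equiv0$ on $(0,\delta)\times\om$ with $\delta:=T-\inf\supp g>0$. But for $t>0$ the impulse response solves the \emph{homogeneous} equation $(\rho\pa_t^\al+\cL)V=0$, $\cR V=0$, so by the time-analyticity of $V$ on $\BR_+$ together with the unique continuation arguments underlying Theorem \ref{t3} — applied now to this homogeneous equation and to the leading Laurent coefficients of the meromorphic family $p\mapsto(\cL+p^\al\rho)^{-1}h$, which satisfy homogeneous elliptic equations to which strong unique continuation across the interfaces $\bigcup_\ell\pa\Om_\ell\setminus\pa\Om$ applies without any vanishing of $h$ there — one obtains $V\equiv0$, hence $(\cL+p^\al\rho)^{-1}h\equiv0$ and so $h\equiv0$, contradicting \eqref{eq-assume-h}. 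Therefore $g\equiv0$ on $(0,T)$.

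\textbf{Main obstacle.} The substantive difficulty lies in the second step, and specifically in the unique continuation across the interfaces where the order $\al$ jumps — exactly the obstruction resolved in Theorem \ref{t3}, which I would invoke rather than reprove, using crucially that here the relevant objects (the impulse response and the Laurent coefficients of the resolvent) satisfy homogeneous equations and so require no hypothesis on $h$ near the interfaces. Everything else is routine: the bookkeeping of the observation times, the verification that the auxiliary source terms obey \eqref{t1b} and \eqref{t3b}--\eqref{t3aa}, the causality of \eqref{eq1}, and the identification of $v$ with $g*V$.
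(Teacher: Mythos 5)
Your recovery of $h$ is correct and is organized somewhat differently from the paper: you introduce the auxiliary solution $z$ with source $\mu_2h_1$ and use causality (read off from the Duhamel representation of Proposition \ref{pp2}) to replace $u_1$ by $z$ on $(0,T_0)\times\Om$, so that $z-u_2$ has the genuinely separated source $\mu_2(t)(h_1-h_2)(\bm x)$ and Theorem \ref{t3} applies verbatim with observation on $(T_1,T_0)\times\om$. The paper instead works directly with $u_1-u_2$, whose source is separated only on $(0,T_0)$, and re-runs the Step-1 argument of Theorem \ref{t3} (the extension from $(T_1,T_0)$ to $(0,T_0)$ via \cite{JK}) before invoking the theorem; your version makes the reduction to a separated source explicit and is, if anything, cleaner. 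The skeleton of your $\mu$-step (Titchmarsh applied to $v=g*V$, then showing the impulse response cannot vanish on $(0,\de)\times\om$ unless $h\equiv0$) also matches the paper's, which runs the end of the proof of Corollary \ref{c2}.

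The gap is in how you close the $\mu$-step. Having reduced to the implication ``$V=S(\cdot)(\rho^{-1}h)=0$ on $(0,\de)\times\om$ forces $h\equiv0$'', you assert that it follows from unique continuation applied to the leading Laurent coefficients of $p\longmapsto(\cL+p^\al\rho)^{-1}h$, ``without any vanishing of $h$'' near the interfaces. For a variable order this is unsubstantiated: $p\longmapsto(\cA+p^{\al(\cdot)}\rho)^{-1}$ is a nonlinear operator pencil (one cannot substitute $\eta=p^\al$ globally when $\al$ depends on $\bm x$), its singularities are not eigenvalues of a fixed self-adjoint operator and its ``residues'' are not eigenprojections, so the constant-order residue argument of \cite{KSXY} does not transfer; this is precisely the obstruction that hypothesis \eqref{t3b} is designed to circumvent. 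Moreover, elliptic unique continuation by itself propagates $\wh V(p;\,\cdot\,)=0$ only into regions where $h$ vanishes, since elsewhere $\wh V$ solves an inhomogeneous equation. Fortunately the extra mechanism you invoke is unnecessary: under the corollary's hypotheses (the conditions of Theorem \ref{t3} together with \eqref{t3aa} and \eqref{eq-assume-h}) one has $h_1=0$ in $\om\cup\cO$, so the implication you need is exactly Steps 2--3 of the proof of Theorem \ref{t3} applied to $h=h_1$. Replace your Laurent-coefficient argument by that citation and the proof is complete.
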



To the best of our knowledge, Theorem \ref{t3} and Corollary
\ref{c3} are the first results on the uniqueness of inverse source problems for
time-fractional diffusion equations with variable orders. We restrict the order
to a piecewise constant function due to its significance from the practical point
of view. In that context, we obtain some uniqueness results comparable to the
case of a constant order, provided that the support of spatial component in the
source term does not pass through the regions where $\al$ takes different
values (see condition \eqref{t3b}).

As was mentioned before, the highlight in Theorems \ref{t2}--\ref{t3} and
Corollaries \ref{c2}--\ref{c3} is the relaxation of the choice of the observation
time. Indeed, for $\al\ne1$, all other related results required that the
observation data should be taken from $t=0$. In contrast, Theorems
\ref{t2}--\ref{t3} and Corollaries \ref{c2}--\ref{c3} only assume that the
measurement does not terminate earlier than $T_0$ in \eqref{t1b}. Recalling
that \eqref{t1b} means $\mu\not\equiv0$ in $(0,T_0)$, such an assumption is
the minimal necessary that one can expect. This remarkable relaxation not
only allows the absence of data near the initial time, but also enables the
delayed measurement carried out even after the disappearance of the source
under consideration. Note that such restriction of the measurement requires
the application of a result in \cite{JK}, which cannot be applied to $\al=1$. It is
not clear whether the implication \eqref{tt2aa} still holds true for $\al=1$ with
$T_1>0$.

\Section{Proof of Theorem \ref{t1} and Corollary \ref{c1}}

\begin{proof}[Proof of Theorem $\ref{t1}$]
Let $u$ be the solution of \eqref{eq1} with $\al=2$ and assume that the
condition
\begin{equation}\label{t1c}
u=0\quad\mbox{in }(0,T)\times\om
\end{equation}
is fulfilled, where $T$ satisfies \eqref{t1a}. We will show that \eqref{t1c} implies
$h\equiv0$.

According to the Duhamel's principle, it is readily seen that $u$ takes the form
of
\begin{equation}\label{t1d}
u(t,\,\cdot\,)=\int_0^t\mu(t-\tau)\,v(\tau,\,\cdot\,)\,\rd\tau,
\end{equation}
where $v$ solves
\[
\begin{cases}
(\rho\,\pa_t^2+\cA)v=0 & \mbox{in }(0,T)\times\Om,\\
v=0,\ \pa_t v=h & \mbox{in }\{0\}\times\Om,\\
\cR v=0 & \mbox{on }(0,T)\times\pa\Om.
\end{cases}
\]
By Lions and Magenes \cite{LM1}, we know
$v\in C([0,T];H^1(\Om))\cap C^1([0,T];L^2(\Om))$. Then for arbitrarily fixed
$\psi\in C^\infty_0(\om)$, it follows from \eqref{t1d} and \eqref{t1c} that
\[
0={}_{\cD'(\om)}\langle u(t,\,\cdot\,),\psi\rangle_{C^\infty_0(\om)}
=\int_0^t\mu(t-\tau)\,v_\psi(\tau)\,\rd\tau,\quad0<t<T,
\]
where
$v_\psi(t):={}_{\cD'(\om)}\langle v(t,\,\cdot\,),\psi\rangle_{C^\infty_0(\om)}$.
Since $\mu\in L^1(0,T)$, we apply the Titchmarsh convolution theorem (see
\cite[Theorem VII]{Ti}) to deduce that there exist $\tau_1,\tau_2\in[0,T]$
satisfying $\tau_1+\tau_2\ge T$ such that
\[
\mu\equiv0\mbox{ in }(0,\tau_1),\quad v_\psi\equiv0\mbox{ in }(0,\tau_2).
\]
In view of the key assumption \eqref{t1b}, we have $\tau_1\le T_0$. Then it
follows from \eqref{t1a} that 
\[
\tau_2\ge T-\tau_1\ge T-T_0\ge\sup_{\bm x\in\Om}\dist(\bm x,\om)=:T_1,
\]
which implies
\[
v_\psi(t)={}_{\cD'(\om)}\langle v(t,\,\cdot\,),\psi\rangle_{C^\infty_0(\om)}
=0,\quad t\in[0,T_1].
\]
Since in this identity, $\psi\in C^\infty_0(\om)$ was chosen arbitrary and
$T_1$ is independent of $\psi$, we deduce that
\begin{equation}\label{t1f}
v=0\quad\mbox{in }(0,T_1)\times\om.
\end{equation}

On the other hand, recalling the metric $g$ defined on $\ov\Om\,$, we
introduce the Laplace-Beltrami operator $\tri_g$ associated with the
Riemannian manifold $(\ov\Om\,,g)$. Then there exists
$\bm K\in C(\ov\Om\,;\BR^d)$ such that
\[
(\pa_t^2-\tri_g+\bm K\cdot\nb+\rho^{-1}c)v
=(\pa_t^2+\rho^{-1}\cA)v=0\quad\mbox{in }(0,T)\times\Om.
\]
Now we define the odd extension of $v$ in $(-T,T)\times\Om$, still denoted by
$v$, by
\[
v(t,\,\cdot\,)=-v(-t,\,\cdot\,)\quad\mbox{in }\Om,\ -T<t<0.
\]
Using the fact that $v(0,\,\cdot\,)=0$, we see that $v\in H^1((-T,T)\times\Om)$.
Meanwhile, in view of \eqref{t1f}, we see that $v$ satisfies
\[
\begin{cases} 
(\pa_t^2-\tri_g+\bm K\cdot\nb+\rho^{-1}c)v=0
& \mbox{in }(-T_1,T_1)\times\Om,\\
v=0 & \mbox{in }(-T_1,T_1)\times\om.
\end{cases}
\]
Therefore, applying a global unique continuation result theorem similar to
\cite[Theorem 3.16]{KKL} (see also \cite[Theorem A.1]{KMO}) derived from the
local unique continuation result of \cite{RZ,T1}, we obtain
\[
h=\pa_t v(0,\,\cdot\,)=0\quad
\mbox{in }\{\bm x\in\Om\mid\mathrm{dist}(\bm x,\om)\le T_1\}.
\]
Then it follows from the definition of $T_1$ that
$\{\bm x\in\Om\mid\dist(\bm x,\om)\le T_1\}=\Om$ or equivalently
$h\equiv0$ in $\Om$. This completes the proof of Theorem \ref{t1}.
\end{proof}

\begin{proof}[Proof of Corollary $\ref{c1}$]
Introducing $u:=u_1-u_2$, it is readily seen that $u$ satisfies
\[
\begin{cases}
(\rho(\bm x)\pa_t^2u+\cA)u(t,\bm x)=\mu(t)(h_1-h_2)(\bm x),
& (t,\bm x)\in\BR_+\times\Om,\\
u(0,\bm x)=\pa_t u(0,\bm x)=0, & \bm x\in\Om,\\
\cR u(t,\bm x)=0, & (t,\bm x)\in\BR_+\times\pa\Om.
\end{cases}
\]
Meanwhile, the condition $u_1=u_2$ in $(0,T)\times\om$ implies $u=0$ in
$(0,T)\times\om$. Since $T$ satisfies \eqref{t1b}, Theorem \ref{t1} immediately
implies $h_1-h_2\equiv0$ or equivalently $h_1\equiv h_2$ in $\Om$.
\end{proof}

\Section{Proof of Theorem \ref{t2} and Corollary \ref{c2}}

In this section, we investigate the case of a constant $\al\in(0,2)$ in
\eqref{eq1}. To this end, a similar solution representation to \eqref{t1d} is
necessary, which requires some preparations.

In the case of $\al\in(0,1]$, we specify $\cL$ as the unbounded elliptic
operator acting on $L^2(\Om)$ with the domain
$\cD(\cL)=\{f\in H^2(\Om)\mid\cR f=0\mbox{ on }\pa\Om\}$. From now on, for
all $s,\te\in[0,\infty)$, we denote by $D_{s,\te}$ the set
\[
D_{s,\te}:=\{s+r\,\e^{\ri\be}\mid r>0,\ \be\in(-\te,\te)\}.
\]
According to \cite[Theorem 2.1]{A} (see also \cite[Theorem 2.5.1]{LLMP}),
there exists $\te_0\in(\f\pi2,\pi)$ and $s_0\ge0$ such that $D_{s_0,\te_0}$ is in
the resolvent set of $\cL$. Moreover, there exists a constant $C>0$ depending
on $\cL$ and $\Om$ such that
\[
\|(\cL+z)^{-1}\|_{\cB(L^2(\Om;\rho\,\rd\bm x))}
+|z|^{-1}\|(\cL+z)^{-1}\|_{\cB(L^2(\Om;\rho\,\rd\bm x);H^2(\Om))}
\le C|z|^{-1},\quad z\in D_{s_0,\te_0}.
\]
Here we employed the fact that $L^2(\Om)=L^2(\Om;\rho\,\rd\bm x)$ in the
sense of the norm equivalence thanks to \eqref{eq-rho}. We fix
$\te_1\in(\f\pi2,\te_0)$, $\de\in\BR_+$ and consider a contour $\ga(\de,\te_1)$
in $\BC$ defined by
\[
\ga(\de,\te_1):=\ga_-(\de,\te_1)\cup\ga_0(\de,\te_1)\cup\ga_+(\de,\te_1)
\]
oriented in the counterclockwise direction, where
\[
\ga_0(\de,\te_1):=\{s_0+\de\,\e^{\ri\be}\mid\be\in[-\te_1,\te_1]\},
\quad\ga_\pm(\de,\te_1):=\{s_0+r\,\e^{\pm\ri\te_1}\mid r\ge\de\}
\]
with double signs in the same order.

Let $\te_2\in(0,\te_1-\f\pi2)$. Applying the above properties of $\cL$, for
$\al\in(0,1]$ and $z\in D_{0,\te_2}$, we can define an operator
$S_1(z)\in\cB(L^2(\Om))$ by 
\[
S_1(z)u_0=\f1{2\pi\,\ri}\int_{\ga(\de,\te_1)}\e^{z p}(\cL+p^\al)^{-1}u_0\,\rd p,
\quad u_0\in L^2(\Om).
\]
We recall the following property of the map $z\longmapsto S_1(z)$.

\begin{lem}\label{l1}
{\rm(see \cite[Lemma 2.4]{Kia}) } For all $\be\in[0,1],$ the map
$z\longmapsto S_1(z)$ is analytic from $D_{0,\te_2}$ to
$\cB(L^2(\Om); H^{2\be}(\Om))$. Moreover$,$ there exists a constant $C>0$
depending only on $\cL$ and $\Om$ such that
\begin{equation}\label{l1a}
\|S_1(z)\|_{\cB(L^2(\Om;\rho\,\rd\bm x);H^{2\be}(\Om))}
\le C|z|^{\al(1-\be)-1}\e^{s_0\,\rRe\,z},\quad z\in D_{0,\te_0}.
\end{equation}
\end{lem}

In addition to this property, we combine \eqref{l1a} with the arguments for
\cite[Theorem 1.1]{KSY} and \cite[Remark 1]{KSY} to deduce that for
$\mu\in L^\infty(\BR_+)$, \eqref{eq1} admits a unique weak solution
$u\in C([0,\infty);L^2(\Om))$ taking the form
\begin{equation}\label{t2cd}
u(t,\,\cdot\,)=\int_0^t\mu(t-\tau)\,S_1(\tau)h\,\rd\tau.
\end{equation}
Using some arguments similar to that for
\cite[Proposition 6.1]{KSXY}, one can show that the identity
\eqref{t2cd} also holds true for $\mu\in L^1(\BR_+)$ with a
compact support.

Likewise, in the case of $\al\in(1,2)$ with $\bm b\equiv\bm0$, we turn to the
eigensystem $\{(\la_n,\vp_n)\}_{n\in\BN}$ of the self-adjoint operator
$A=\rho^{-1}\cA$ acting on $L^2(\Om;\rho\,\rd\bm x)$, with the boundary
condition $\cR u=0$ in $\pa\Om$, to define an operator $S_2(t)$ for $t>0$ by
\[
S_2(t)u_0=t^{\al-1}\sum_{n=1}^\infty
E_{\al,\al}(-t^\al\la_n)\,(u_0,\vp_n)_\rho\,\vp_n,\quad u_0\in L^2(\Om),
\]
where $(\,\cdot\,,\,\cdot\,)_\rho$ denotes the inner product of
$L^2(\Om;\rho\,\rd\bm x)$. Here $E_{\al,\al}(\,\cdot\,)$ is the Mittag-Leffler
function defined by 
\[
E_{\al,\al}(z)=\sum_{k=0}^\infty\f{z^k}{\Ga(\al k+\al)},\quad z\in\BC.
\]
Similarly as before, applying \cite[Theorem 1.6]{P} and following the arguments
used in \cite[Theorem 1.1]{KY}, one can check that $S_2$ is well-defined as an
element of $L^1_\loc(\BR_+;\cB(L^2(\Om)))$. Moreover, we can show that
\eqref{eq1} admits a unique weak solution
$u\in C(\BR_+;L^2(\Om))$ taking the form
\begin{equation}\label{t2dd}
u(t,\,\cdot\,)=\int_0^t\mu(t-\tau)\,S_2(\tau)h\,\rd\tau.
\end{equation}

For the proof of Theorem \ref{t2}, we also invoke the
Riemann-Liouville integral operator $I^\be$ and the Riemann-Liouville
derivative $D_t^\be$ for $\be\in(0,2]$:
\[
I^\be f(t):=\left\{\!\begin{alignedat}{2}
& f(t), & \ & \be=0,\\
& \f1{\Ga(\be)}\int_0^t\f{f(\tau)}{(t-\tau)^{1-\be}}\,\rd\tau, & \ & \be>0
\end{alignedat}\right.\ (f\in C([0,\infty))),\quad D_t^\be:=
\f{\rd^{\lceil\be\rceil}}{\rd t^{\lceil\be\rceil}}\circ I^{\lceil\be\rceil-\be},
\]
where $\circ$ denotes the composite. We need the following technical lemma.

\begin{lem}\label{lll1}
Let $h\in L^1(0,T),$ $\be\in(0,1)\cup(1,2),$ $\la>0$ and $w\in L^1(0,T)$ be
given by
\[
w(t):=\int_0^t(t-\tau)^{\be-1}E_{\be,\be}(-\la(t-\tau)^{\be})h(\tau)\,\rd\tau.
\]
Then
\[
(D_t^\be+\la)w(t)=h(t),\quad t\in(0,T)
\]
holds true in the sense of distributions.
\end{lem}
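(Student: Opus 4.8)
The plan is to verify the identity $(D_t^\be+\la)w=h$ directly via Laplace transform, exploiting the fact that $w$ is a convolution. Write $w = k_\be * h$, where $k_\be(t):=t^{\be-1}E_{\be,\be}(-\la t^\be)$ is the Mittag--Leffler kernel. The first step is to recall the classical Laplace transform formula
\[
\wh{k_\be}(p)=\int_0^\infty\e^{-pt}t^{\be-1}E_{\be,\be}(-\la t^\be)\,\rd t
=\f1{p^\be+\la},\quad\rRe\,p>\la^{1/\be},
\]
which follows by expanding $E_{\be,\be}$ in its power series, integrating term by term using $\int_0^\infty\e^{-pt}t^{\be k+\be-1}\,\rd t=\Ga(\be k+\be)p^{-\be k-\be}$, and summing the resulting geometric series in $\la p^{-\be}$. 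Since $h\in L^1(0,T)$ (extended by zero to $\BR_+$, so compactly supported), and $k_\be\in L^1_\loc(\BR_+)$ with at worst an integrable singularity $t^{\be-1}$ at the origin when $\be<1$, the convolution $w=k_\be*h$ is a well-defined element of $L^1(0,T)$, and its Laplace transform factorizes: $\wh w(p)=\wh{k_\be}(p)\,\wh h(p)=\wh h(p)/(p^\be+\la)$.

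The next step is to identify $D_t^\be w$ through its Laplace transform. Because $w$ has the convolution structure $w = I^{\lceil\be\rceil}(g*h)$ where one can check the lower-order derivatives and Riemann--Liouville integral of $w$ vanish at $t=0$ (the kernel $k_\be$ and its relevant antiderivatives are $o(1)$ or vanish at $0^+$ for $\be\in(0,1)\cup(1,2)$), all the boundary terms in the Laplace transform of $D_t^\be=\f{\rd^{\lceil\be\rceil}}{\rd t^{\lceil\be\rceil}}\circ I^{\lceil\be\rceil-\be}$ drop out, giving $\wh{D_t^\be w}(p)=p^\be\wh w(p)$. Hence
\[
\wh{(D_t^\be+\la)w}(p)=(p^\be+\la)\,\wh w(p)=(p^\be+\la)\cdot\f{\wh h(p)}{p^\be+\la}=\wh h(p)
\]
for all $p$ with $\rRe\,p$ sufficiently large. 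By injectivity of the Laplace transform on $L^1_\loc(\BR_+)$ (or, equivalently, by pairing against test functions and invoking the uniqueness of distributional Laplace transforms), we conclude $(D_t^\be+\la)w=h$ in $\cD'(0,T)$.

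The main obstacle is the careful justification of the boundary-term-free formula $\wh{D_t^\be w}(p)=p^\be\wh w(p)$ when $h$ is merely $L^1$ rather than continuous: one must argue at the level of distributions rather than pointwise. The clean way around this is to avoid pointwise manipulations entirely — establish the identity first for $h\in C_0^\infty(0,T)$, where all the classical fractional-calculus computations (e.g. $D_t^\be I^\be f=f$ and $D_t^\be(k_\be*h)=h-\la k_\be*h$, which is exactly $(D_t^\be+\la)w=h$) are rigorous, and then pass to general $h\in L^1(0,T)$ by density: the map $h\mapsto w=k_\be*h$ is continuous from $L^1(0,T)$ into itself by Young's inequality, and $h\mapsto(D_t^\be+\la)w$ is continuous into $\cD'(0,T)$, so the identity extends by continuity. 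This density argument, together with the Laplace-transform computation above serving as the formal backbone, completes the proof.
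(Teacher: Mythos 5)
Your proof is correct and, in its rigorous form, coincides with the paper's own argument: approximate $h$ by $C_0^\infty(0,T)$ functions, invoke the classical identity for smooth data, and pass to the limit using Young's convolution inequality (giving $w_k\to w$ and $I^{\lceil\be\rceil-\be}w_k\to I^{\lceil\be\rceil-\be}w$ in $L^1(0,T)$) together with the distributional continuity of $\rd^{\lceil\be\rceil}/\rd t^{\lceil\be\rceil}$. The Laplace-transform computation you lead with serves only as heuristic motivation --- as you yourself note, the boundary-term-free formula cannot be justified directly for $h\in L^1$ --- and the paper dispenses with it entirely, going straight to the density argument.
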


The result of Lemma \ref{lll1} is rather classical for smooth
function $h$. However, we have not found a clear proof of
such result for $h\in L^1(0,T)$. For this reason, we provide its full proof
here.

\begin{proof}[Proof of Lemma $\ref{lll1}$]
Let $\be\in(0,1)\cup(1,2)$ and fix a sequence
$\{h_k\}_{k\in\BN}\subset C^\infty_0(0,T)$ such that
\[
\lim_{k\to\infty}\|h_k-h\|_{L^1(0,T)}=0.
\]
One can check that, for $\{w_k\}_{k\in\BN}$ defined by
\[
w_k(t):=\int_0^t(t-\tau)^{\be-1}E_{\be,\be}(-\la(t-\tau)^{\be})h_k(\tau)\,\rd\tau,
\]
there holds
\begin{equation}\label{lll1c}
(D_t^\be+\la)w_k(t)=h_k(t),\quad t\in(0,T).
\end{equation}
Moreover, applying \cite[Theorem 1.6]{P} and Young's
convolution inequality, we get
\[
\limsup_{k\to\infty}\|w_k-w\|_{L^1(0,T)}
\le C\|t^{\be-1}\|_{L^1(0,T)}\limsup_{k\to\infty}\|h_k-h\|_{L^1(0,T)}=0.
\]
In the same way, we obtain
\begin{equation}\label{lll1e}
\limsup_{k\to\infty}\|I^\be w_k-I^\be w\|_{L^1(0,T)}
\le C\|t^{\lfloor\be\rfloor-\be}\|_{L^1(0,T)}
\limsup_{k\to\infty}\|w_k-w\|_{L^1(0,T)}=0.
\end{equation}
Therefore, fixing $\chi\in C^\infty_0(0,T)$ and applying \eqref{lll1e}, we find
\begin{align}
\lim_{k\to\infty}{}_{\cD'(0,T)}\langle D_t^\be w_k,\chi\rangle_{C^\infty_0(0,T)}
& =\lim_{k\to\infty}{}_{\cD'(0,T)}\langle\pa_t^{\lceil\be\rceil}
I^{\lceil\be\rceil-\be}w_k,\chi\rangle_{C^\infty_0(0,T)}
\nonumber\\
& =(-1)^{\lceil\be\rceil}\lim_{k\to\infty}{}_{\cD'(0,T)}\langle
I^{\lceil\be\rceil-\be}
w_k,\chi^{(\lceil\be\rceil)}\rangle_{C^\infty_0(0,T)}\nonumber\\
& =(-1)^{\lceil\be\rceil}{}_{\cD'(0,T)}\langle
I^{\lceil\be\rceil-\be}
w,\chi^{(\lceil\be\rceil)}\rangle_{C^\infty_0(0,T)}\nonumber\\
& ={}_{\cD'(0,T)}\langle D_t^\be w,\chi\rangle_{C^\infty_0(0,T)}.\label{lll1f}
\end{align}
In the same way, we have
\begin{align*}
\lim_{k\to\infty}{}_{\cD'(0,T)}\langle w_k,\chi\rangle_{C^\infty_0(0,T)}
& ={}_{\cD'(0,T)}\langle w,\chi\rangle_{C^\infty_0(0,T)},\\
\lim_{k\to\infty}{}_{\cD'(0,T)}\langle h_k,\chi\rangle_{C^\infty_0(0,T)}
& ={}_{\cD'(0,T)}\langle h,\chi\rangle_{C^\infty_0(0,T)}.
\end{align*}
Combining these identities with \eqref{lll1f} and \eqref{lll1c},
we obtain
\begin{align*}
{}_{\cD'(0,T)}\langle h,\chi\rangle_{C^\infty_0(0,T)}
& =\lim_{k\to\infty}{}_{\cD'(0,T)}\langle h_k,\chi\rangle_{C^\infty_0(0,T)}
=\lim_{k\to\infty}{}_{\cD'(0,T)}\langle (D_t^\be+\la)w_k,
\chi\rangle_{C^\infty_0(0,T)}\\
& ={}_{\cD'(0,T)}\langle (D_t^\be+\la)w,\chi\rangle_{C^\infty_0(0,T)}.
\end{align*}
This completes the proof of the lemma.
\end{proof}

Now we are in a position to prove Theorem \ref{t2}.

\begin{proof}[Proof of Theorem $\ref{t2}$]
Let $u$ be the solution of \eqref{eq1} with a constant $\al\in(0,2)$. Without
loss of generality, we only investigate the case of $\al\in(0,1)$ with
$\rho\equiv1$ because the case of $\al\in(1,2)$ with $\bm b\equiv\bm0$ can
be treated in the same manner. For clarity, we divide the proof into three
steps.\medskip

{\bf Step 1 } Let us fix $T\ge T_0$ and define
\begin{equation}\label{v}
v(t,\,\cdot\,):=I^{1-\al}u(t,\,\cdot\,),\quad t>0.
\end{equation}
Using the fact that $u\in L^1_\loc(\BR_+;L^2(\Om))$, we deduce that
$v\in L^1(0,T;L^2(\Om))$. In this step we will prove that actually
$v\in W^{1,1}(0,T;H^{-2}(\Om))$. Following Definition \ref{d1} of weak
solutions, we recall that for $p>p_0$, the Laplace transform
$\wh u(p;\,\cdot\,)$ of $u(t,\,\cdot\,)$ with respect to $t$ solves the following
boundary value problem 
\[
\begin{cases}
(p^\al+\rho^{-1}\cA)\wh u(p;\,\cdot\,)=-\rho^{-1}\bm b\cdot\nb\wh u(p;\,\cdot\,)
+\wh\mu(p)\rho^{-1}h & \mbox{in }\Om,\\
\cR\wh u(p;\,\cdot\,)=0 & \mbox{on }\pa\Om.
\end{cases}
\]
Therefore, for all $p>p_0$ and all $n\in\BN$, we have
\[
\left( (\rho^{-1}\cA+p^\al)\wh u(p;\,\cdot\,),\vp_n\right)_\rho
=-\left(\rho^{-1}\bm b\cdot\nb\wh u(p;\,\cdot\,),\vp_n\right)_\rho
+\wh\mu(p)\left(\rho^{-1}h,\vp_n\right)_\rho.
\]
Integrating by parts, we obtain
\[
\left((p^\al+\rho^{-1}\cA)\wh u(p;\,\cdot\,),\vp_n\right)_\rho
=\left(\wh u(p;\,\cdot\,),(p^\al+\rho^{-1}\cA)\vp_n\right)_\rho
=(\la_n+p^\al)\left(\wh u(p;\,\cdot\,),\vp_n\right)_\rho
\]
and it follows
\[
\left( \wh u(p;\,\cdot\,),\vp_n\right)_\rho
=\f{-\left(\rho^{-1}\bm b\cdot\nb\wh u(p;\,\cdot\,),\vp_n\right)_\rho}
{\la_n+p^\al}+\f{\wh\mu(p)\left(\rho^{-1}h,\vp_n\right)_\rho}{\la_n+p^\al}.
\]
In the same way, fixing
\[
w(t,\,\cdot\,):=\int_0^t S_2(t-\tau)\left[-\rho^{-1}\bm b\cdot\nb u(\tau,\,\cdot\,)
+\mu(\tau)\rho^{-1}h\right]\rd\tau,\quad t>0,
\]
we have
\[
\left( w(t,\,\cdot\,),\vp_n\right)_\rho=\int_0^t(t-\tau)^{\al-1}
E_{\al,\al}(-\la_n(t-\tau)^{\al})\left(-\rho^{-1}\bm b\cdot\nb u(\tau,\,\cdot\,)
+\mu(\tau)\rho^{-1}h,\vp_n\right)_\rho\rd\tau,\quad t>0
\]
for all $n\in\BN$. Therefore, applying the Laplace transform in
time for $p>p_0$ to the expression above, we get
\[
\left( \wh w(p;\,\cdot\,),\vp_n\right)_\rho
=\f{-\left(\rho^{-1}\bm b\cdot\nb\wh u(p;\,\cdot\,),\vp_n\right)_\rho}
{\la_n+p^\al}+\f{\wh\mu(p)\left(\rho^{-1}h,\vp_n\right)_\rho}{\la_n+p^\al}
=\left( \wh u(p;\,\cdot\,),\vp_n\right)_\rho
\]
for all $n\in\BN$. Then it follows that
\[
\wh w(p;\,\cdot\,)=\wh u(p;\,\cdot\,),\quad p>p_0,
\]
and the uniqueness of the Laplace transform in time implies that
\begin{equation}\label{idd1}
u(t,\,\cdot\,)=w(t,\,\cdot\,)=\int_0^t S_2(t-\tau)
\left[-\rho^{-1}\bm b\cdot\nb u(\tau,\,\cdot\,)+\mu(\tau)\rho^{-1}h\right]
\rd\tau,\quad t>0.
\end{equation}
In view of \eqref{idd1}, fixing
\begin{equation}\label{fn}
f_n(t)=\left(-\rho^{-1}\bm b\cdot\nb u(t,\,\cdot\,)+\mu(t)\rho^{-1}h,
\vp_n\right)_\rho,\quad n\in\BN,
\end{equation}
we deduce that
\begin{equation}\label{un}
u_n(t):=(u(t,\,\cdot\,),\vp_n)_\rho=\int_0^t(t-\tau)^{\al-1}
E_{\al,\al}(-\la_n(t-\tau)^{\al})f_n(\tau)\,\rd\tau,\quad n\in\BN,\ t>0.
\end{equation}

Since $u\in L^1(0,T;H^1(\Om))$ by \cite[pp.13--15]{Kia}, we
deduce that $-\rho^{-1}\bm b\cdot\nb u
+\mu\,\rho^{-1}h\in L^1(0,T;L^2(\Om))$ and thus
$f_n\in L^1(0,T)$ for all $n\in\BN$ by \eqref{fn}. Then in view of Lemma
\ref{lll1}, we know that
\begin{equation}\label{idd2}
D_t^\al u_n(t)=-\la_n u_n(t)+f_n(t),\quad n\in\BN,\ t\in(0,T)
\end{equation}
holds in the sense of distributions. Since
$-\rho^{-1}\bm b\cdot\nb u+\mu\,\rho^{-1}h
\in L^1(0,T;L^2(\Om))$, we deduce that the sequence 
\[
\sum_{n=1}^N f_n(t)\vp_n\quad(N\in\BN)
\]
converges in the sense of $L^1(0,T;L^2(\Om))$. In the same way, we can prove
that the sequence
\[
\sum_{n=1}^N\la_n u_n(t)\vp_n\quad(N\in\BN)
\]
converges in the sense of $L^1(0,T;\cD(A^{-1}))$, where we recall that
$A=\rho^{-1}\cA$ acts on $L^2(\Om;\rho\,\rd\bm x)$ with the boundary
condition $\cR u=0$ on $\pa\Om$. This proves that 
\[
\sum_{n=1}^ND_t^\al u_n(t)\vp_n=\sum_{n=1}^N[-\la_nu_n(t)+f_n(t)]
\vp_n\quad(N\in\BN)
\]
converges in the sense of $L^1(0,T;\cD(A^{-1}))$. Thus, $D_t^\al u$ is well
defined and
\[
D_t^\al u=\sum_{n=1}^\infty D_t^\al u_n(t)\vp_n\in L^1(0,T;\cD(A^{-1})).
\]
Let $H^2_0(\Om)$ be the closure of $C^\infty_0(\Om)$ in $H^2(\Om)$.
It is clear that $H^2_0(\Om)$ is embedded continuously into
$\cD(A)$ and by duality, we deduce that $\cD(A^{-1})$ embedded continuously
into $H^{-2}(\Om)$. It follows that $D_t^\al u\in L^1(0,T;H^{-2}(\Om))$. On the
other hand, the function $v$ given by \eqref{v} satisfies
$\pa_t v=D_t^\al u\in L^1(0,T;H^{-2}(\Om))$, and using the
fact that $v\in L^1(0,T;L^2(\Om))$, we get $v\in W^{1,1}(0,T;H^{-2}(\Om))$.\medskip

{\bf Step 2 } We fix $T_1\in[0,T)$. In this step, we will show that the condition 
\begin{equation}\label{tt2}
u=0\mbox{ in }(T_1,T)\times\om
\end{equation}
implies
\begin{equation}\label{t2d}
u=0\quad\mbox{in }(0,T)\times\om.
\end{equation}
Similarly to the proof of Theorem \ref{t1}, we set
$u_\psi(t):={}_{\cD'(\om)}\langle u(t,\,\cdot\,),\psi\rangle_{C^\infty_0(\om)}$
($0<t<T$) with arbitrarily fixed $\psi\in C^\infty_0(\om)$.
According to Step 1, the function $v$ defined by \eqref{v}
lies in $W^{1,1}(0,T;H^{-2}(\Om))$ and thus
\begin{equation}\label{idd4}
I^{1-\al}u_\psi={}_{\cD'(\om)}\langle v(t,\,\cdot\,),
\psi\rangle_{C^\infty_0(\om)}\in W^{1,1}(0,T).
\end{equation}
Moreover, in view of \eqref{idd2} and the fact that the sequence
\[
\sum_{n=1}^ND_t^\al u_n(t)\vp_n=\sum_{n=1}^N[-\la_nu_n(t)+f_n(t)]\vp_n,
\quad N\in\BN,
\]
converge in the sense of $L^1(0,T;\cD(A^{-1}))$ to $D_t^\al u$, we deduce that
for a.e.\! $t\in(0,T)$, we have
\begin{align*}
D_t^\al u(t,\,\cdot\,) & =\sum_{n=1}^\infty D_t^\al u_n(t)\vp_n
=\sum_{n=1}^\infty[-\la_nu_n(t)+f_n(t)]\vp_n\\
& =\sum_{n=1}^\infty\left(-\rho^{-1}\cA u(t,\,\cdot\,)
-\rho^{-1}\bm b\cdot\nb u(t,\,\cdot\,)+\mu(t)\rho^{-1}h,\vp_n\right)_\rho\vp_n\\
& =-\rho^{-1}\cA u(t,\,\cdot\,)-\rho^{-1}\bm b\cdot\nb u(t,\,\cdot\,)
+\mu(t)\rho^{-1}h.
\end{align*}
Here we recall that $f_n$ and $u_n$ were given by \eqref{fn}
and \eqref{un}, respectively. On the other hand, from \eqref{tt2} we deduce
that $-\rho^{-1}\cA u-\rho^{-1}\bm b\cdot\nb u=0$ in $(T_1,T)\times\om$.
Combining this with the fact that $h=0$ in $\om$, we obtain
\begin{equation}\label{idd5}
D_t^\al u=-\rho^{-1}\cA u-\rho^{-1}\bm b\cdot\nb u+\mu\,h=0
\quad\mbox{in }(T_1,T)\times\om.
\end{equation}
In the same way, using the fact that the function $v$ given by \eqref{v}
lies in $W^{1,1}(0,T;H^{-2}(\Om))$, we obtain
\[
\begin{aligned}
D_t^\al u_\psi(t) & =\f\rd{\rd t}I^{1-\al}u_\psi(t)=\f\rd{\rd t}
\left({}_{\cD'(\om)}\langle v(t,\,\cdot\,),\psi\rangle_{C^\infty_0(\om)}\right)\\
& ={}_{\cD'(\om)}\langle \pa_tv(t,\,\cdot\,),\psi\rangle_{C^\infty_0(\om)}
={}_{\cD'(\om)}\langle D_t^\al u(t,\,\cdot\,),\psi\rangle_{C^\infty_0(\om)}
\end{aligned}
\]
for a.e.\! $t\in(0,T)$, and \eqref{idd5} implies
$D_t^\al u_\psi=0$ in $(T_1,T)$. Therefore, we have
$u_\psi=D_t^\al u_\psi=0$ in $(T_1,T)$. Combining this with condition
\eqref{idd4} and applying \cite[Theorem 1]{JK} yields
\[
u_\psi(t)={}_{\cD'(\om)}\langle u(t,\,\cdot\,),\psi\rangle_{C^\infty_0(\om)}
=0,\quad0<t<T,
\]
which implies \eqref{t2d} since $\psi\in C^\infty_0(\om)$ was chosen
arbitrarily.\medskip

{\bf Step 3 } In this step, we show that \eqref{t2d} implies $h\equiv0$ in
$\Om$. We deal with the cases of $\al\in(0,1]$ and $\al\in(1,2)$ separately.\medskip

{\bf Case 1 } We first study the case of $\al\in(0,1]$ with $\rho\equiv1$. In the
same manner as before, for arbitrarily fixed $\psi\in C^\infty_0(\om)$, it
follows from \eqref{t2d} and the solution representation \eqref{t2cd} that
\[
0={}_{\cD'(\om)}\langle u(t,\,\cdot\,),\psi\rangle_{C_0^\infty(\om)}
=\int_0^t\mu(t-\tau)\,v_\psi(\tau)\,\rd\tau,\quad0<t<T,
\]
where $v_\psi(t):={}_{\cD'(\om)}\langle S_1(t)h,\psi\rangle_{C^\infty_0(\om)}
\in L^1(0,T)$ by Lemma \ref{l1}. Therefore, parallel to the proof of Theorem
\ref{t1}, the Titchmarsh convolution theorem guarantees the existence of
constants $\tau_1,\tau_2\in[0,T]$ satisfying $\tau_1+\tau_2\ge T$ such that
\[
\mu\equiv0\mbox{ in }(0,\tau_1),\quad v_\psi\equiv0\mbox{ in }(0,\tau_2).
\]
Since $\mu\not\equiv0$ in $(0,T_0)$ by the key assumption \eqref{t1b}, we
conclude $\tau_1<T_0$ and thus
\[
\tau_2\ge T-\tau_1>T-T_0\ge0.
\]
In other words, we obtain 
\[
v_\psi(t)={}_{\cD'(\om)}\langle S_1(t)h,\psi\rangle_{C^\infty_0(\om)}
=0,\quad0<t<\tau_2,
\]
which implies
\begin{equation}\label{t2f}
v(t,\,\cdot\,):=S_1(t)h=0\quad\mbox{in }\om,\ 0<t<\tau_2
\end{equation}
again since $\psi\in C^\infty_0(\om)$ was chosen arbitrarily. On the other
hand, similarly to the proofs of \cite[Theorem 1.1]{KSY} and
\cite[Theorem 1.4]{LKS}, we deduce that the map $t\longmapsto S_1(t)h$
admits an analytic extension from $\BR_+$ to $L^2(\Om)$. Therefore,
\eqref{t2f} implies
\begin{equation}\label{t2g}
v=0\quad\mbox{in }\BR_+\times\om.
\end{equation}
In view of Lemma \ref{l1}, the Laplace transform $\wh v(p;\,\cdot\,)$ of $v$
with respect to $t$ is well-defined for all $p>s_0$. Further, following \cite{Kia}
(see also \cite[Theorem 1.1]{KSY}), we see that $\wh v(p;\,\cdot\,)$ solves the
boundary value problem for an elliptic equation
\[
\begin{cases} 
(p^\al+\cL)\wh v(p;\,\cdot\,)=h & \mbox{in }\Om,\\
\cR\wh v(p;\,\cdot\,)=0 & \mbox{on }\pa\Om,
\end{cases}\quad\forall\,p>s_0.
\]
Meanwhile, \eqref{t2g} implies
\begin{equation}\label{t2h}
\wh v(p;\,\cdot\,)=0\quad\mbox{in }\om,\ p>s_0.
\end{equation}

On the other hand, let us introduce an initial-boundary value problem for a
parabolic equation
\[
\begin{cases} 
(\pa_t+\cL)w=0 & \mbox{in }\BR_+\times\Om,\\
w=h & \mbox{in }\{0\}\times\Om,\\
\cR w=0 & \mbox{in }\BR_+\times\pa\Om.
\end{cases}
\]
as well as the Laplace transform $\wh w(p;\,\cdot\,)$ of $w$ with respect to
$t$ for $p>p_0$, where $p_0\ge s_0^\al$ is sufficiently large. Then
straightforward calculation yields that for all $p>p_0$, there holds
$\wh v(p^{1/\al};\,\cdot\,)=\wh w(p;\,\cdot\,)$. Combining this with \eqref{t2h}
and applying the uniqueness of the Laplace transform, we obtain
\[
w=0\quad\mbox{in }\BR_+\times\om.
\]
Consequently, the unique continuation property of parabolic equations (e.g.
\cite[Theorem 1.1]{SS}) implies $w\equiv0$ in $\BR_+\times\Om$ and thus
$h=w(0,\,\cdot\,)\equiv0$ in $\Om$. This completes the proof of Theorem
\ref{t2} for $\al\in(0,1]$.\medskip

{\bf Case 2 } Now we turn to the case of $\al\in(1,2)$ with
$\bm b\equiv\bm0$. Combining the solution representation \eqref{t2dd} with
the above arguments, again we can conclude \eqref{t2f} with
$v(t,\,\cdot\,):=S_2(t)h$ in this case. On the other hand, applying again
\cite[Theorem 1.6]{P} and utilizing arguments similar to those used in the proof
of \cite[Proposition 3.1]{KLLY}, one can check that the map
$t\longmapsto v(t,\,\cdot\,)$ admits an analytic extension as a function from
$\BR_+$ to $L^2(\Om)$. Therefore, again \eqref{t2f} implies \eqref{t2g}.

Then for $p>0$, it follows from \cite[Theorem 1.1]{KSY} that the Laplace
transform $\wh v(p;\,\cdot\,)$ of $v$ with respect to $t$ solves the boundary
value problem for an elliptic equation
\[
\begin{cases}
(p^\al\rho+\cA)\wh v(p;\,\cdot\,)=h & \mbox{in }\Om,\\
\cR\wh v(p;\,\cdot\,)=0 & \mbox{on }\pa\Om.
\end{cases}
\]
Meanwhile, \eqref{t2g} implies \eqref{t2h} again. Therefore, repeating the
above arguments (see also Step 2 in the proof of \cite[Theorem 1.1]{KSXY}),
again we arrive at $h\equiv0$ in $\Om$. This completes the proof of Theorem
\ref{t2} for $\al\in(1,2)$.
\end{proof}

\begin{proof}[Proof of Corollary $\ref{c2}$]
Since the proofs for the cases of $\al\in(0,1]$ and $\al\in(1,2)$ are again
similar, we only deal with the former one without loss of generality.

Introducing the auxiliary function $u:=u_1-u_2$, we see that $u$ satisfies
\begin{equation}\label{eq-c2-u}
\begin{cases}
(\pa_t^\al+\cL)u=F:=\mu_1h_1-\mu_2h_2 & \mbox{in }\BR_+\times\Om,\\
u=0 & \mbox{in }\{0\}\times\Om,\\
\cR u=0 & \mbox{on }\BR_+\times\pa\Om
\end{cases}
\end{equation}
along with the additional condition $u=0$ in $(T_1,T)\times\om$. Especially,
thanks to the assumption $T_1<T_0$ in \eqref{c2b}, we have
\begin{equation}\label{eq-c2-1}
u=0\quad\mbox{in }(T_1,T_0)\times\om. 
\end{equation}
On the other hand, it follows from \eqref{c2a}--\eqref{c2aa} that
\begin{equation}\label{eq-def-F}
F=\begin{cases}
\mu_1(h_1-h_2) & \mbox{in }(0,T_0)\times\Om,\\
(\mu_1-\mu_2)h_1 & \mbox{in }\BR_+\times\om
\end{cases}
\end{equation}
and in particular,
\begin{equation}\label{eq-c2-2}
F=0\quad\mbox{in }(0,T_0)\times\om.
\end{equation}
Therefore, repeating the argument used in Step 1 of the proof of Theorem
\ref{t2}, we utilize \eqref{eq-c2-1} and \eqref{eq-c2-2} to conclude $u\equiv0$
in $(0,T_0)\times\om$. Then a direct application of Theorem \ref{t2} implies
$h_1-h_2\equiv0$ or equivalently $h_1= h_2$ in $\Om$.

From now on we can write $F=\mu\,h_1$ in \eqref{eq-c2-u} with
$\mu:=\mu_1-\mu_2$ in $\BR_+\times\Om$ and it remains to show
$\mu\equiv0$ in $(0,T)$. Now that $u=0$ in $(0,T)\times\om$, we can take
advantage of the solution representation \eqref{t2cd} to obtain
\[
0={}_{\cD'(\om)}\langle u(t,\,\cdot\,),\psi\rangle_{C_0^\infty(\om)}
=\int_0^t\mu(t-\tau)\,v_\psi(\tau)\,\rd\tau,\quad0<t<T
\]
with arbitrarily fixed $\psi\in C_0^\infty(\om)$, where
$v_\psi(t):={}_{\cD'(\om)}\langle S_1(t)h_1,\psi\rangle_{C^\infty_0(\om)}$. Again
by the Titchmarsh convolution theorem, there exist constants
$\tau_1,\tau_2\in[0,T]$ satisfying $\tau_1+\tau_2\ge T$ such that
\[
\mu\equiv0\mbox{ in }(0,\tau_1),\quad v_\psi\equiv0\mbox{ in }(0,\tau_2).
\]
Now it suffices to show $\tau_2=0$ by contradiction. If $\tau_2>0$, then we
obtain $S_1(t)h_1=0$ in $\om$ for $0<t<\tau_2$. By the same argument used
in Step 2 of the proof of Theorem \ref{t2}, we can conclude $h_1=0$ in
$\Om$, which contradicts with the assumption $h_1\not\equiv0$. Then there
should hold $\tau_2=0$ and thus $T\ge\tau_1\ge T-\tau_2=T$, that is,
$\mu\equiv0$ in $(0,T)$. This completes the proof of Corollary \ref{c2}.
\end{proof}

\Section{Proof of Theorem \ref{t3} and Corollary \ref{c3}}\label{sec-t3}

In this section, we assume that $\bm b\equiv\bm0$ and
$\al\in L^\infty(\Om;(0,1))$ fulfills \eqref{bb}--\eqref{vo}. We fix
$\te\in(\f\pi2,\pi)$, $\de>0$ and we define the contour in $\BC$, 
\[
\ga(\de,\te):=\ga_-(\de,\te)\cup\ga_0(\de,\te)\cup\ga_+(\de,\te),
\]
oriented in the counterclockwise direction with
\begin{equation}\label{g2}
\ga_0(\de,\te):=\{\de\,\e^{\ri\,\be}\mid\be\in[-\te,\te]\},\quad\ga_\pm(\de,\te)
:=\{s\,\e^{\pm\ri\te}\mid s\in[\de,\infty)\},
\end{equation}
with double signs in the same order. Then, following
\cite{KSY}, we define the operator
\begin{equation}\label{S}
S(t)\psi:=\f1{2\pi\,\ri}\int_{\ga(\de,\te)}\e^{t p}(p^\al\rho+A)^{-1}\psi\,\rd p,
\quad t>0
\end{equation}
Recall that here $A=\rho^{-1}\cA$ acts on $L^2(\Om;\rho\,\rd\bm x)$, with the
boundary condition $\cR u=0$ in $\pa\Om$. Moreover, according to
\cite[Theorem 1.1]{KSY}, the operator $S$ is independent of the choice of
$\te\in\left(\f\pi2,\pi\right)$, $\de>0$. In light of \cite[Theorem 1.1]{KSY} and
\cite[Remark 1]{KSY}, for $\mu\in L^\infty(\BR_+)$ compactly supported,
problem \eqref{eq1} admits a unique weak solution (in the sense of Definition
\ref{d1}) given by
\[
u(t,\,\cdot\,)=
\int_0^t\mu(t-\tau)\,S(\tau)(\rho^{-1}h)\,\rd\tau,\quad t>0.
\]
We prove in Proposition \ref{pp2} that the result of \cite{KSY} can be extended
to problem \eqref{eq1} with $\mu\in L^1(\BR_+)$ compactly supported. Using
the representation \eqref{S} of the weak solution of \eqref{eq1}, we are now in
position to complete the proof of Theorem \ref{t3} and Corollary \ref{c3}.

\begin{proof}[Proof of Theorem $\ref{t3}$]
Let $u$ be the solution of \eqref{eq1} with a variable $\al$ satisfying
\eqref{bb}--\eqref{vo} and $\om\subset\Om$ satisfy \eqref{t3aa}. Similarly to
the proof of Theorem \ref{t2}, we divide the proof into two steps.\medskip

{\bf Step 1 } First we prove that, for any $T\ge T_0$ and $T_1\in[0,T)$, the
condition \eqref{tt2} implies \eqref{t2d}. For this purpose, we investigate each
subdomain $\Om_\ell$ ($\ell=1,\ldots,N$), on which $\al(\bm x)=\al_\ell$
reduces to a constant. If $\om\cap\Om_\ell\ne\emptyset$, then obviously
$\cA u=0$ in $(T_1,T)\times(\om\cap\Om_\ell)$. Meanwhile, the assumption
\eqref{t3aa} implies $h=0$ in $\om\cap\Om_\ell$. Then in view of the
governing equation in \eqref{eq1}, we obtain
\[
\pa_t^{\al_\ell}u=-\rho^{-1}(\cA u+\mu\,h)=0
\quad\mbox{in }(T_1,T)\times(\om\cap\Om_\ell).
\]
Repeating the argument used in Step 1 of the proof of Theorem \ref{t2}, we can
deduce $u=0$ in $(0,T)\times(\om\cap\Om_\ell)$, which indicates \eqref{t2d}
by collecting all nonempty $\om\cap\Om_\ell$ ($\ell=1,\ldots,N$).\medskip

{\bf Step 2 } We show that \eqref{t2d} implies $h\equiv0$ in $\Om$ in this
step. According to Proposition \ref{pp2}, $u$ takes the form 
\[
u(t,\,\cdot\,)=\int_0^t\mu(t-\tau)\,v(\tau,\,\cdot\,)\,\rd\tau
\]
with $v(t,\,\cdot\,):=S(t)\rho^{-1}h$. Applying condition \eqref{vo} and Lemma
\ref{lll2}, we deduce that $v\in L^1_\loc(\BR_+;L^2(\Om))$ with
\begin{equation}\label{ess2}
\|v(t,\,\cdot\,)\|_{L^2(\Om)}\le C\|\rho^{-1}h\|_{L^2(\Om)}
\max\left(t^{2\al_N-\al_1-1},t^{2\al_1-\al_N-1},1\right),\quad t>0.
\end{equation}
Combining this result with the arguments of Step 2 of the proof of Theorem
\ref{t2}, we can show that \eqref{t2d} implies the existence of a constant
$\tau_2>0$ such that
\begin{equation}\label{t3f}
v=0\quad\mbox{in }(0,\tau_2)\times\om.
\end{equation}
Following \cite[Theorem 1.1]{KSY}, we deduce that the map
$t\longmapsto v(t,\,\cdot\,)$ admits an analytic extension from $\BR_+$ to
$L^2(\Om)$. Therefore, the condition \eqref{t3f} implies
\begin{equation}\label{t3g}
v=0\quad\mbox{in }\BR_+\times\om.
\end{equation}
On the other hand, the estimate \eqref{ess2} implies
\[
\int_0^\infty\e^{-p t}\|v(t,\,\cdot\,)\|_{L^2(\Om)}\,\rd t<\infty,\quad p>0
\]
and applying the properties of the operator $S$ borrowed form
\cite[Theorem 1.1]{KSY}, we obtain
\[
\wh v(p;\,\cdot\,)=(p^\al+A)^{-1}(\rho^{-1}h),\quad p>0.
\]
Thus, $\wh v(p;\,\cdot\,)$ solves the boundary value problem for an elliptic
equation
\[
\begin{cases}
(p^\al\rho+\cA)\wh v(p;\,\cdot\,)=h & \mbox{in }\Om,\\
\cR\wh v(p;\,\cdot\,)=0 & \mbox{on }\pa\Om.
\end{cases}
\]
Simultaneously, \eqref{t3g} implies that
\[
\wh v(p;\,\cdot\,)=0\quad\mbox{in }\om,\ p>0.
\]
Combining this with \eqref{t3b} and \eqref{t3aa}, we deduce that for all $p>0$,
the restriction of $\wh v(p;\,\cdot\,)$ to $\cO$ satisfy
\begin{equation}\label{eq-gov-v}
\begin{cases}
(p^\al\rho+\cA)\wh v(p;\,\cdot\,)=0 & \mbox{in }\cO,\\
\wh v(p;\,\cdot\,)=0 & \mbox{in }\om\cap\cO. 
\end{cases}
\end{equation}
Therefore, applying the unique continuation property of elliptic equations (see
e.g. \cite[Theorem 1]{SS1}) yields
\begin{equation}\label{eq-v-0}
\wh v(p;\,\cdot\,)=0\quad\mbox{in }\cO,\ p>0.
\end{equation}
This, together with the fact that $\cR\wh v(p;\,\cdot\,)=0$ on $\pa\Om$ and
$\pa\Om_\ell\subset\pa\Om\cup\cO$ for all $\ell=1,\ldots,N$ by \eqref{t3b},
leads to
\[
\cR\wh v(p;\,\cdot\,)=0\quad\mbox{on }\pa\Om_\ell,\ p>0,\ \ell=1,\ldots,N.
\]
Therefore, combining this boundary condition with \eqref{eq-gov-v} and
\eqref{eq-v-0}, we find that the restriction of $\wh v(p;\,\cdot\,)$ to $\Om_\ell$
($\ell=1,\ldots,N$) satisfies
\[
\begin{cases}
(p^{\al_\ell}\rho+\cA)\wh v(p;\,\cdot\,)=h & \mbox{in }\Om_\ell,\\
\cR\wh v(p;\,\cdot\,)=0 & \mbox{on }\pa\Om_\ell,\\
\wh v(p;\,\cdot\,)=0 & \mbox{in }\Om_\ell\cap\cO,
\end{cases}\quad p>0.
\]
Here we notice that $\Om_\ell\cap\cO\ne\emptyset$ by \eqref{t3b}. Thus,
utilizing the arguments used in Step 3 of the proof of
\cite[Theorem 1.1]{KSXY}, we conclude $h=0$ in $\Om_\ell$ for all
$\ell=1,\ldots,N$ and consequently $h\equiv0$. This completes the proof of
Theorem \ref{t3}.
\end{proof}

\begin{proof}[Proof of Corollary $\ref{c3}$]
Similarly to the proof of Corollary \ref{c2}, the auxiliary function $u:=u_1-u_2$
satisfies
\[
\begin{cases}
(\rho\,\pa_t^\al+\cA)u=F:=\mu_1h_1-\mu_2h_2
& \mbox{in }\BR_+\times\Om,\\
u=0 & \mbox{in }\{0\}\times\Om,\\
\cR u=0 & \mbox{on }\BR_+\times\pa\Om
\end{cases}
\]
with $u=0$ in $(T_1,T)\times\om$, where $F$ satisfies \eqref{eq-def-F}.
Especially, we have $F=\mu_1(h_1-h_2)$ in $(0,T_0)\times\om$, where
$h_1-h_2=0$ in $\om\cup\cO$ by \eqref{eq-assume-h}. Repeating the
argument used in Step 1 of the proof of Theorem \ref{t3}, we can conclude
$u=0$ in $(0,T_0)\times\om$, which further implies $h_1-h_2\equiv0$ or
equivalently $h_1= h_2$ in $\Om$ by Theorem \ref{t3}.

Finally, in a similar way to the end of the proof of Corollary \ref{c2}, we arrive
at $\mu_1\equiv\mu_2$ in $(0,T)$, which completes the proof of Corollary
\ref{c3}.
\end{proof}

\Section{Appendix}\label{sec-app}

Let $\al\in L^\infty(\Om)$ be such that there exist two constants
$\al_0,\al_M\in(0,1)$ such that
\begin{equation}\label{alpha1}
0<\al_0\le\al\le\al_M<1,\quad\al_M<2\al_0.
\end{equation}
In this section, we prove the unique existence of a weak
solution to the problem \eqref{eq1} when $\bm b\equiv\bm0$,
$\al\in L^\infty(\Om)$ satisfies \eqref{alpha1} and $\mu\in L^1(\BR_+)$ is
compactly supported. We start with the following intermediate result.

\begin{lem}\label{lll2}
Let $\te\in\left(\f\pi2,\pi\right)$. The map $t\longmapsto S(t)$ defined by
\eqref{S} lies in $L^1_\loc(\BR_+;\cB(L^2(\Om))$ and there
exists a constant $C>0$ depending only on $\cA,\rho,\te,\Om$
such that
\begin{equation}\label{ll2a}
\|S(t)\|_{\cB(L^2(\Om))}\le
C\max\left(t^{2\al_M-\al_0-1},t^{2\al_0-\al_M-1},1\right),\quad t>0.
\end{equation}
\end{lem}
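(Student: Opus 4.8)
The plan is to estimate the operator norm of the contour integral \eqref{S} directly, splitting the contour $\ga(\de,\te)$ into its three pieces $\ga_\pm(\de,\te)$ and $\ga_0(\de,\te)$ and bounding the resolvent $(p^\al\rho+A)^{-1}$ on each. First I would record the basic resolvent bound: since $A=\rho^{-1}\cA$ is a positive self-adjoint operator on $L^2(\Om;\rho\,\rd\bm x)$ and $\rho\ge\underline\rho>0$, for $z$ in a sector around the negative real axis one has $\|(z\rho+\cA)^{-1}\|_{\cB(L^2(\Om))}\le C|z|^{-1}$ uniformly, provided $\arg z$ stays away from $0$. The subtlety here is that the exponent $\al=\al(\bm x)$ is \emph{variable}, so $p^\al$ is not a scalar but the multiplication operator $\bm x\mapsto p^{\al(\bm x)}$; nevertheless, for $p=s\,\e^{\pm\ri\te}$ on $\ga_\pm$ one has $p^{\al(\bm x)}=s^{\al(\bm x)}\e^{\pm\ri\te\al(\bm x)}$, and since $\al(\bm x)\in[\al_0,\al_M]\subset(0,1)$ the argument $\pm\te\al(\bm x)$ stays in a fixed closed subinterval of $\left(-\f\pi2\cdot\f{2\al_M}{?},\dots\right)$ — more precisely in $\bigl(\f\pi2,\pi\bigr)\cdot\al$ range; the key point is that it remains bounded away from $0$ and from $\pm\pi$, so the operator $p^\al\rho+\cA$ remains boundedly invertible with $\|(p^\al\rho+\cA)^{-1}\|_{\cB(L^2(\Om))}\le C|p|^{-\al_0}$ (using $|p^{\al(\bm x)}|=s^{\al(\bm x)}\ge\min(s^{\al_0},s^{\al_M})$, hence $|p|^{-\al}\le\max(|p|^{-\al_0},|p|^{-\al_M})$ pointwise).

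Next I would insert this bound into \eqref{S}. On the rays $\ga_\pm(\de,\te)$ we get
\[
\Bigl\|\f1{2\pi\ri}\int_{\ga_\pm}\e^{tp}(p^\al\rho+A)^{-1}\,\rd p\Bigr\|_{\cB(L^2(\Om))}
\le C\int_\de^\infty\e^{-ts\cos(\pi-\te)}\max\bigl(s^{-\al_0},s^{-\al_M}\bigr)\,\rd s,
\]
where $\cos(\pi-\te)>0$ since $\te\in(\f\pi2,\pi)$; splitting the $s$-integral at $s=1/t$ and substituting $\si=ts$ yields a bound $C\max(t^{\al_0-1},t^{\al_M-1},1)$ for this piece. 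On the circular arc $\ga_0(\de,\te)$, where $|p|=\de$ is fixed, the integrand is bounded by $\e^{\de t}\max(\de^{-\al_0},\de^{-\al_M})$ times the arc length, giving a contribution bounded by $C\e^{\de t}$; optimizing over $\de>0$ — or rather, recalling that $S(t)$ is independent of $\de$ by \cite[Theorem 1.1]{KSY}, so we may take $\de=1/t$ — collapses this into $C\max(t^{\al_0-1},t^{\al_M-1},1)$ as well. Combining the three pieces gives \eqref{ll2a}, at least with the exponents $\al_0-1$ and $\al_M-1$; the appearance of the sharper exponents $2\al_M-\al_0-1$ and $2\al_0-\al_M-1$ in \eqref{ll2a} is because the naive scalar estimate $|p^{-\al}|\le\max(|p|^{-\al_0},|p|^{-\al_M})$ is too crude near $|p|=1$, and one must instead track the two-sided behaviour of $s^{\al(\bm x)}$ more carefully, writing $|(p^\al\rho+\cA)^{-1}|\le C\,|p|^{-\al(\bm x)}$ pointwise and then taking the $L^2\to L^2$ operator norm of multiplication by $\max_{\bm x}|p|^{-\al(\bm x)}=\max(|p|^{-\al_0},|p|^{-\al_M})$ — and, crucially, in the region where several $\al_\ell$'s compete, using the spectral decomposition of $A$ restricted to functions supported where $\al=\al_\ell$ together with the elementary Mittag-Leffler-type bound $\sup_{\la\ge0}\la^{\al_\ell-\al_m}|E(\cdot)|$, which is finite precisely because of the constraint $\al_M<2\al_0$ in \eqref{alpha1}.

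The main obstacle will be exactly this last point: controlling the \emph{interaction} between the different values of the variable order, which is why the hypothesis $\al_M<2\al_0$ (equivalently \eqref{vo}) is indispensable. Concretely, after the contour deformation one is led to estimate, for a resolvent-type kernel acting between the spectral subspaces associated to $\al_\ell$ and $\al_m$, a quantity of the form $\bigl\||p|^{\al_\ell}(\,|p|^{\al_m}\rho+\cA\,)^{-1}\bigr\|\le C|p|^{\al_\ell-\al_m}$ for $|p|\le 1$ and $\le C$ for $|p|\ge1$; since $|\al_\ell-\al_m|\le\al_M-\al_0<\al_0\le\al_m$, the exponent $\al_\ell-\al_m$ is strictly larger than $-\al_0$, and after integrating $\e^{-cts}s^{\al_\ell-\al_m-1}$ against the ray one recovers a power of $t$ no worse than $t^{2\al_0-\al_M-1}$ (resp. $t^{2\al_M-\al_0-1}$) from the two extreme cases. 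Absent the constraint $\al_M<2\al_0$ this integral would diverge at $s=0$, and the solution operator would fail to be locally integrable. I would also need to verify measurability in $t$ and the Bochner-integrability needed to assert $S\in L^1_{\loc}(\BR_+;\cB(L^2(\Om)))$, but that follows routinely from continuity of $t\mapsto S(t)$ on $\BR_+$ (which in turn comes from dominated convergence applied to the contour integral, the dominating function being the one just constructed) together with the local integrability of $\max(t^{2\al_M-\al_0-1},t^{2\al_0-\al_M-1},1)$ near $t=0$, valid since both exponents exceed $-1$.
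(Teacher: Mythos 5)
Your overall skeleton coincides with the paper's: decompose the contour as $\ga_0(\de,\te)\cup\ga_\pm(\de,\te)$, exploit the $\de$-independence of $S(t)$ to take $\de=t^{-1}$, bound each piece by integrating an exponential against a power of $|p|$, and observe that the hypothesis $\al_M<2\al_0$ is exactly what makes the resulting exponents exceed $-1$, whence $S\in L^1_\loc(\BR_+;\cB(L^2(\Om)))$. That part of your proposal is sound and matches the paper.

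The gap is at the one non-routine point, the resolvent estimate. The paper's proof rests entirely on
\[
\bigl\|\bigl(A+(r\,\e^{\ri\be_1})^\al\bigr)^{-1}\bigr\|_{\cB(L^2(\Om))}
\le C\max\bigl(r^{\al_0-2\al_M},r^{\al_M-2\al_0}\bigr),\quad r>0,
\]
quoted from \cite[Proposition 2.1]{KSY}; inserting this into the three contour pieces yields \eqref{ll2a} directly. You instead assert the scalar-type bound $C\max(|p|^{-\al_0},|p|^{-\al_M})$, which is precisely what is \emph{not} available here: $p^{\al(\bm x)}\rho$ is a multiplication operator that does not commute with $\cA$, so the constant-order argument does not transfer, and the bound that is actually known is strictly weaker --- since $\al_0-2\al_M<-\al_M$ and $\al_M-2\al_0>-\al_0$, it is larger both as $|p|\to0$ and as $|p|\to\infty$, not merely ``near $|p|=1$'' as you claim. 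Correspondingly, the exponents in \eqref{ll2a} are \emph{more} singular at $t=0$ than the $\al_0-1$, $\al_M-1$ your first computation produces, not ``sharper''. Your proposed repair via ``spectral subspaces associated to $\al_\ell$'' is not workable: the eigenfunctions of $A$ are not supported in the individual $\Om_\ell$, so $A$ admits no decomposition adapted to the partition on which $\al$ is constant, and the exponent bookkeeping in your final paragraph (integrating $\e^{-cts}s^{\al_\ell-\al_m-1}$) does not in fact return the stated powers $t^{2\al_0-\al_M-1}$, $t^{2\al_M-\al_0-1}$. The missing ingredient is simply the cited variable-order resolvent estimate; once it is in hand, the rest of your contour computation goes through essentially as in the paper.
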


\begin{proof} 
Throughout this proof, by $C>0$ we denote generic constants
depending only on $\cA,\rho,\te,\Om$, which may change from line to line.
In light of \cite[Proposition 2.1]{KSY}, for all $\be\in(0,\pi)$, we have
\begin{equation}\label{ll2b}
\left\|\left(A+(r\,\e^{\ri\be_1})^\al\right)^{-1}\right\|_{\cB(L^2(\Om ))}
\le C\max\left(r^{\al_0-2\al_M},r^{\al_M-2\al_0}\right),
\quad r>0,\ \be_1\in(-\be,\be),
\end{equation}
where $C>0$ depends only on $\cA,\rho,\Om$ and $\be$.
Using the fact that the operator $S$ is independent of the choice of $\de>0$
(see the discussion at the beginning of Section \ref{sec-t3}), we can
decompose
\[
S(t)=S_-(t)+S_0(t)+S_+(t),\quad t>0,
\]
where
\[
S_m(t)=\f1{2\pi\,\ri}\int_{\ga_m(t^{-1},\te)}\e^{t p}(p^\al+A)^{-1}\,\rd p,
\quad m=0,\mp,\ t>0.
\]
Here we recall that the contours $\ga_m(\de,\te)$ for
$m=0,\mp$ and $\de>0$ were given by \eqref{g2}.

In order to complete the proof of the lemma, it suffices to prove
\begin{equation}\label{ll2c}
\|S_m(t)\|_{B(L^2(\Om)}\le C\max\left(t^{2\al_M-\al_0-1},
t^{2\al_0-\al_M-1},1\right),\quad t>0,\ m=0,\mp.
\end{equation}
Indeed, it follows from \eqref{alpha1} that $2\al_0-\al_M-1>-1$
and $2\al_M-\al_0-1>\al_M-1>-1$. Thus, applying the estimates \eqref{ll2c}, we
can easily deduce that $S\in L^1_\loc(\BR_+;\cB(L^2(\Om))$ satisfies the
estimate \eqref{ll2a}.

For $m=0$, using \eqref{ll2b}, we find
\[
\|S_0(t)\|_{\cB(L^2(\Om)}\le C\int_{-\te}^\te
t^{-1}\left\|\left(A+(t^{-1}\e^{\ri\be})^\al\right)^{-1}\right\|_{\cB(L^2(\Om)}
\rd\be\le C\max\left(t^{2\al_M-\al_0-1},t^{2\al_0-\al_M-1}\right),
\]
which implies \eqref{ll2c} for $m=0$. For $m=\mp$, again we
employ \eqref{ll2b} to estimate
\begin{align*}
\|S_\mp(t)\|_{\cB(L^2(\Om)} & \le C\int_{t^{-1}}^\infty\e^{r t\cos\te}
\left\|\left(A+(r\,\e^{\ri\te})^\al\right)^{-1}\right\|_{\cB(L^2(\Om)}\rd r\\
& \le C\int_{t^{-1}}^\infty\e^{r t\cos\te}
\max\left(r^{\al_0-2\al_M},r^{\al_M-2\al_0}\right)\rd r.
\end{align*}
For $t>1$, we obtain
\begin{align*}
\|S_\mp(t)\|_{\cB(L^2(\Om)} & \le C\int_1^\infty\e^{r t\cos\te}
r^{\al_M-2\al_0}\,\rd r+C\int_{t^{-1}}^1r^{\al_0-2\al_M}\,\rd r\\
& \le C\int_0^\infty\e^{r t\cos\te}r^{\al_M-2\al_0}\,\rd r
+C\left(t^{2\al_M-\al_0-1}+1\right)\\
& \le C\,t^{-1}\int_0^\infty\e^{r\cos\te}
\left(\f r t\right)^{\al_M-2\al_0}\rd r
+C\left(t^{2\al_M-\al_0-1}+1\right)\\
& \le C\max\left(t^{2\al_0-\al_M-1},t^{2\al_M-\al_0-1},1\right).
\end{align*}
In the same way, for $t\in(0,1]$, we get
\[
\|S_\mp(t)\|_{\cB(L^2(\Om)}\le C\int_1^\infty\e^{r t\cos\te}
r^{\al_M-2\al_0}\,\rd r\le C\,t^{2\al_0-\al_M-1}.
\]
Combining these two estimates, we obtain
\[
\|S_\mp(t)\|_{\cB(L^2(\Om)}\le
C\max\left(t^{2\al_0-\al_M-1},t^{2\al_M-\al_0-1},1\right),\quad t>0.
\]
This proves that \eqref{ll2c} also holds true for $m=\mp$. This
completes the proof of the lemma.
\end{proof}

Using Lemma \ref{lll2}, we will show that
\cite[Theorem 1.1]{KSY} can be extended to $\mu\in L^1(\BR_+)$
with a compact support.

\begin{prop}\label{pp2} 
Let $h\in L^2(\Om),$ $\al\in L^\infty(\Om)$ satisfy \eqref{alpha1} and
$\mu\in L^1(\BR_+)$ be compactly supported. Then there exists a unique weak
solution $u\in L^1_\loc(0,T;L^2(\Om))$ to \eqref{eq1}.
\end{prop}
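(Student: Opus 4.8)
The plan is to establish existence by constructing the candidate solution via the operator $S(t)$ from \eqref{S} and verifying it satisfies Definition \ref{d1}, then to prove uniqueness via the injectivity of the Laplace transform together with the unique solvability of the elliptic problem \eqref{d1a}. First I would define, for $\mu\in L^1(\BR_+)$ with compact support, the function
\[
u(t,\,\cdot\,):=\int_0^t\mu(t-\tau)\,S(\tau)(\rho^{-1}h)\,\rd\tau,\quad t>0.
\]
By Lemma \ref{lll2}, $S\in L^1_\loc(\BR_+;\cB(L^2(\Om)))$ with the growth bound \eqref{ll2a}, and since $2\al_0-\al_M-1>-1$ and $2\al_M-\al_0-1>-1$ under \eqref{alpha1}, the map $\tau\mapsto\|S(\tau)(\rho^{-1}h)\|_{L^2(\Om)}$ is locally integrable on $\BR_+$; being the convolution of two functions in $L^1_\loc$, one of which ($\mu$) is compactly supported, Young's convolution inequality (applied on each bounded interval) gives $u\in L^1_\loc(\BR_+;L^2(\Om))$. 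Moreover, since $\mu$ is compactly supported and the growth of $\|S(\tau)(\rho^{-1}h)\|_{L^2(\Om)}$ is at most polynomial, $u$ itself has at most polynomial growth in $t$, so condition~1 of Definition~\ref{d1} holds with $p_0=0$.

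Next I would compute the Laplace transform of $u$. For $p>0$, since $u$ is a convolution, $\wh u(p;\,\cdot\,)=\wh\mu(p)\,\wh{S(\cdot)(\rho^{-1}h)}(p)$, and by the properties of $S$ recalled from \cite[Theorem 1.1]{KSY} (the contour integral \eqref{S} together with the resolvent identity), one has $\wh{S(\cdot)(\rho^{-1}h)}(p)=(p^\al\rho+\cA)^{-1}(\ldots)$ acting appropriately; more precisely the Laplace transform of the contour-integral representation collapses, via the residue/Cauchy theorem, to $(p^\al+A)^{-1}(\rho^{-1}h)$. Hence $\wh u(p;\,\cdot\,)=\wh\mu(p)\,(p^\al+A)^{-1}(\rho^{-1}h)$, which is exactly the solution of \eqref{d1a}; this verifies condition~2. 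Here one should be careful that for the \emph{variable}-order operator $p^\al\rho$ the quantity $p^\al$ is the multiplication operator $\bm x\mapsto p^{\al(\bm x)}$, but this causes no trouble in the resolvent estimate \eqref{ll2b} which was already established pointwise in the spectral parameter. This proves existence.

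For uniqueness, I would take two weak solutions $u_1,u_2$ and set $u:=u_1-u_2$. By condition~1 of Definition~\ref{d1} there is $p_0<\infty$ such that $\e^{-p_0 t}u\in L^1(\BR_+;L^2(\Om))$, and by condition~2, for all $p>p_0$ the Laplace transform $\wh u(p;\,\cdot\,)$ solves \eqref{d1a} with right-hand side zero. The point is then that the homogeneous elliptic boundary value problem $(\cL+p^\al\rho)w=0$ in $\Om$, $\cR w=0$ on $\pa\Om$ has only the trivial solution for $p>0$ large enough — this follows because, by the resolvent bound \eqref{ll2b} (equivalently, the location of the spectrum of $A$ away from the relevant region), $-p^\al$ is in the resolvent set of $A=\rho^{-1}\cA$, so $p^\al\rho+\cA$ is injective on $\cD(\cA)$. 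Consequently $\wh u(p;\,\cdot\,)=0$ for all $p>\max(p_0,p_1)$ for suitable $p_1$, and the injectivity of the Laplace transform (on the class $L^1(\BR_+;L^2(\Om))$ after multiplication by $\e^{-p_0 t}$) forces $u\equiv0$ in $\BR_+\times\Om$.

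The main obstacle I expect is the rigorous justification of the interchange of the Laplace transform with the contour integral in \eqref{S} for merely $L^1$, compactly supported $\mu$ — i.e. showing $\wh u(p;\,\cdot\,)=\wh\mu(p)(p^\al+A)^{-1}(\rho^{-1}h)$ — since one must control the double integral (over $t$ and over the contour $\ga(\de,\te)$) and apply Fubini's theorem; this requires the integrability furnished by Lemma \ref{lll2} near $t=0$ and the exponential decay of $\e^{tp}$ along the rays $\ga_\pm$ for $p$ in a right half-plane together with the compact support of $\mu$ to kill any growth at $t=\infty$. Once that interchange is legitimate, the remaining steps are routine manipulations with the already-available resolvent estimates, exactly as in \cite[Theorem 1.1]{KSY} and \cite[Proposition 6.1]{KSXY}.
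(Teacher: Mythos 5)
Your proposal is correct, and it reaches the same candidate solution $u=\mu*S(\cdot)(\rho^{-1}h)$ and the same verification target (condition~2 of Definition~\ref{d1} via the identity $\wh u(p)=\wh\mu(p)(p^\al+A)^{-1}(\rho^{-1}h)$), but by a genuinely different route. You propose to establish that identity \emph{directly}, by the convolution theorem for the Laplace transform plus a Fubini interchange with the contour integral in \eqref{S}; you correctly identify this interchange as the only delicate point, and it is indeed justifiable from \eqref{ll2a} (polynomial growth of $\|S(t)\|$, integrable near $t=0$ since $2\al_0-\al_M-1>-1$ and $2\al_M-\al_0-1>-1$) together with the compact support of $\mu$ — in fact, since the identity $\wh{S(\cdot)\psi}(p)=(p^\al+A)^{-1}\psi$ is already available from \cite[Theorem 1.1]{KSY} and is reused elsewhere in the paper, your Fubini step reduces to the elementary vector-valued convolution theorem and needs less work than you fear. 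The paper instead argues by \emph{density}: it picks $\mu_n\in C^\infty_0$ with $\mu_n\to\mu$ in $L^1(\BR_+)$, invokes \cite[Theorem 1.1 and Remark 1]{KSY} to get $(p^\al+A)\wh{u_n}(p)=\wh{\mu_n}(p)\rho^{-1}h$ for the smooth data, proves the uniform stability estimate $\|\wh{u_n}(p)-\wh u(p)\|_{L^2(\Om)}\le C\|\mu_n-\mu\|_{L^1(\BR_+)}$ via Young's inequality and \eqref{ll2a}, and passes to the limit. The approximation route buys the paper the ability to delegate all contour-integral manipulations to the smooth case already treated in \cite{KSY}; your direct route is more self-contained and avoids the approximating sequence at the cost of re-justifying one interchange of integrals. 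Your uniqueness argument (injectivity of $\cL+p^\al\rho$ on $\cD(\cA)$ for $p>0$, which follows from coercivity since $c\ge\ka>0$, followed by injectivity of the Laplace transform) is exactly what the paper's terse final sentence implicitly relies on, and you spell it out more completely than the paper does.
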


\begin{proof}
Suppose that $\supp\,\mu\subset[0,T]$ for some $T>0$ and
choose a sequence $\{\mu_n\}_{n\in\BN}\subset C^\infty_0(0,T+1)$ satisfying
\begin{equation}\label{p2a}
\lim_{n\to\infty}\|\mu_n-\mu\|_{L^1(\BR_+)}=0.
\end{equation}
In light of Lemma \ref{lll2}, for $t\ge0$ we can introduce
\begin{align*}
u_n(t,\,\cdot\,) & :=\int_0^t\mu_n(t-\tau)\,S(\tau)(\rho^{-1}h)\,\rd\tau,
\quad n\in\BN,\\
u(t,\,\cdot\,) & :=\int_0^t\mu(t-\tau)\,S(\tau)(\rho^{-1}h)\,\rd\tau
\end{align*}
as elements of $L^1_\loc(\BR_+;L^2(\Om))$. We prove that for all $p>0$, the
Laplace transform $\wh u(p)$ of $u$ is well-defined in $L^2(\Om)$ and we
have
\begin{equation}\label{p2c}
\lim_{n\to\infty}\|\wh{u_n}(p)-\wh u(p)\|_{L^2(\Om)}=0.
\end{equation}
Applying estimate \eqref{ll2a}, we obtain
\begin{align*}
\left\|\e^{-p t}u(t,\,\cdot\,)\right\|_{L^2(\Om)} & \le\int_0^t
\e^{-p\tau}\|S(\tau)\|_{\cB(L^2(\Om))}\|\rho^{-1}h\|_{L^2(\Om)}\,
\e^{-p(t-\tau)}|\mu(t-\tau)|\,\rd\tau\\
& \le C\left(\e^{-p t}\max
\left(t^{2\al_0-\al_M-1},t^{2\al_M-\al_0-1},1\right)\right)*
\left(\e^{-p t}|\mu(t)|\right)
\end{align*}
for all $t>0$ and $p>0$, where $*$ denotes the convolution in
$\BR_+$. Therefore, applying Young's convolution inequality
and condition \eqref{alpha1}, we deduce
\begin{align*}
\|\wh u(p)\|_{L^2(\Om)} & \le\int_0^\infty
\left\|\e^{-p t}u(t,\,\cdot\,)\right\|_{L^2(\Om)}\rd t\\
& \le C\left(\int_0^\infty\e^{-p t}
\max\left(t^{2\al_0-\al_M-1},t^{2\al_M-\al_0-1},1\right)\rd t\right)
\left(\int_0^\infty\e^{-p t}|\mu(t)|\,\rd t\right)<\infty.
\end{align*}
for all $p>0$. This proves the well-posedness of $\wh u(p)$ for
all $p>0$ in the sense of $L^2(\Om)$. In the same way, for all
$t>0$, $p>0$ and $n\in\BN$, we get
\begin{align*}
\left\|\e^{-p t}(u_n-u)(t,\,\cdot\,)\right\|_{L^2(\Om)}
& \le\int_0^t\e^{-p\tau}\|S(\tau)\|_{\cB(L^2(\Om))}\|\rho^{-1}h\|_{L^2(\Om)}\,
\e^{-p(t-\tau)}|(\mu_n-\mu)(t-\tau)|\,\rd\tau\\
& \le C\left(\e^{-p t}
\max\left(t^{2\al_0-\al_M-1},t^{2\al_M-\al_0-1},1\right)\right)*
\left(\e^{-p t}|(\mu_n-\mu)(t)|\right).
\end{align*}
Thus, applying Young's convolution inequality again, we have
\begin{align*}
& \quad\,\|\wh{u_n}(p)-\wh u(p)\|_{L^2(\Om)}
\le\int_0^\infty\left\|\e^{-p t}(u_n-u)(t,\,\cdot\,)\right\|_{L^2(\Om)}\rd t\\
& \le C\left(\int_0^\infty\e^{-p t}
\max\left(t^{2\al_0-\al_M-1},t^{2\al_M-\al_0-1},1\right)\rd t\right)
\left(\int_0^\infty\e^{-p t}|\mu_n(t)-\mu(t)|\,\rd t\right)\\
& \le C\|\mu_n-\mu\|_{L^1(\BR_+)}
\end{align*}
for all $p>0$ and $n\in\BN$, and \eqref{p2a} implies \eqref{p2c}. On the other
hand, in view of \cite[Theorem 1.1 and Remark 1]{KSY}, since
$\mu_n\in L^\infty(0,T)$, we have
\[
(p^\al+A)\wh{u_n}(p)=\left(\int_0^\infty\e^{-p t}\mu_n(t,\,\cdot\,)\,\rd t\right)
\rho^{-1}h,\quad p>0
\]
for all $n\in\BN$. In addition, \eqref{p2a} implies that
\[
\lim_{n\to\infty}\wh{\mu_n}(p)=\wh\mu(p),\quad p>0.
\]
Therefore, we obtain
\[
\lim_{n\to\infty}
\left\|\wh{u_n}(p)-\wh\mu(p)(A+p^\al)^{-1}(\rho^{-1}h)\right\|_{L^2(\Om)}=0
\]
and \eqref{p2c} implies that $\wh u(p)=(A+p^\al)^{-1}\wh {\mu}(p)\rho^{-1}h$,
$p>0$. From the definition of the operator $A$, we deduce that
$\wh u(p)$ solves the boundary value problem \eqref{d1a} for
all $p>0$. Therefore, we conclude that $u$ is the unique weak
solution of \eqref{eq1} and the proof is completed.
\end{proof}\medskip

{\bf Acknowledgement}\ \ This paper has been supported by
Grant-in-Aid for Scientific Research (S) 15H05740, Japan Society for the
Promotion of Science (JSPS). The work of Y.\! Kian is partially supported by
the French National Research Agency ANR (project MultiOnde) grant ANR-17
CE40-0029. Y.\! Liu is supported by Grant-in-Aid for Early
Career Scientists 20K14355, JSPS. M.\! Yamamoto is supported by Grant-in-Aid
for Scientific Research (A) 20H00117, JSPS and by the National Natural
Science Foundation of China (Nos.\! 11771270, 91730303).


\end{document}